\newcommand{\C}{\mathbb{C}}
\newcommand{\QQ}{\mathbb{Q}}
\newcommand{\NN}{\mathbb{N}}
\newcommand{\PP}{\mathbb{P}}
\newcommand{\gr}{\hbox{Gr}}
\newcommand{\wt}{\widetilde}
\newcommand{\ima}{\hbox{Im}}
\newcommand{\rom}{\romannumeral}
 \journalname{}
\begin{document}

\title{Correspondences and singular varieties
}


\author{Robert Laterveer 
}


\institute{CNRS - IRMA, Universit\'e de Strasbourg \at
              7 rue Ren\'e Descartes \\
              67084 Strasbourg cedex\\
              France\\
              \email{laterv@math.unistra.fr}           
           }

\date{}

\maketitle

\begin{abstract} What is generally known as the ``Bloch--Srinivas method'' consists of decomposing the diagonal of a smooth projective variety, and then considering the action of correspondences in cohomology. In this note, we observe that this same method can also be extended to singular and quasi--projective varieties. We give two applications of this observation: the first is a version of Mumford's theorem, the second is concerned with the Hodge conjecture for singular varieties.

\keywords{Algebraic cycles \and Chow groups \and Pure motives \and Singular varieties \and Hodge conjecture}
 \subclass{ 14C15 \and  14C25 \and  14C30}
\end{abstract}

\section{Introduction}
\label{intro}

Let $X$ be a smooth complex projective variety. The cycle class maps
  \[ cl^i\colon A^iX_{\QQ}\to H^{2i}(X,\QQ)\]
  from Chow groups to singular cohomology have given rise to some of the most profound and fascinating conjectures in algebraic geometry: the Hodge conjecture (concerning the image of $cl^i$), and the Bloch--Beilinson conjectures (concerning the structure of the kernel of $cl^i$).
  
Since Mumford's work \cite{M}, it is well--known that if the Chow groups $A^iX_{\QQ}$ are ``small'' (in the sense of being supported on some subvariety), then also the singular cohomology groups are small (in the sense that they are supported on some subvariety).
There is for instance the following result:

\begin{theorem}[\cite{J2}] Let $X$ be a smooth projective variety, and suppose $A_0X_{\QQ}$ is supported on a subvariety of dimension $r$. Then 
 the Hodge numbers $h^{p,0}(X)$ are $0$ for $p>r$.
 \end{theorem}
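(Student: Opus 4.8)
The plan is to carry out the Bloch--Srinivas ``decomposition of the diagonal'' argument. Write $n=\dim X$. If $r\ge n$ there is nothing to prove, since $\Omega^p_X=0$ for $p>n$; so assume $r<n$, fix $p>r$, and let $Y\subset X$ be a closed subset of dimension $r$ on which $A_0X_\QQ$ is supported, i.e.\ the pushforward $A_0Y_\QQ\to A_0X_\QQ$ is surjective. Let $\wt Y$ be a desingularization of the reduced scheme underlying $Y$ (a disjoint union of resolutions of its components) and let $g_Y\colon\wt Y\to X$ be the composite of the resolution map with the inclusion $Y\hookrightarrow X$; then $\wt Y$ is smooth projective of dimension $\le r<p$, so that $H^0(\wt Y,\Omega^p_{\wt Y})=0$.

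The first step is to produce the decomposition of the diagonal. The hypothesis on $A_0$ spreads out to the statement that $A_0$ of $X$ over an algebraic closure of its function field is still supported on $Y$; hence, restricting $\Delta_X$ to the generic point of the first factor, the resulting zero-cycle on $X$ over that field is $\QQ$-rationally equivalent to one supported on $Y$. Clearing denominators and spreading this equivalence out over a dense open subset of $X$ then yields a positive integer $N$ and a decomposition
\[
  N\,\Delta_X \;=\; Z_1 \;+\; Z_2 \qquad\text{in}\quad A^n(X\times X)_\QQ ,
\]
with $Z_1$ supported on $X\times Y$ and $Z_2$ supported on $D\times X$ for some proper closed subset $D\subset X$, necessarily of dimension $\le n-1$. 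Using that $\Delta_X$ is symmetric, we may transpose this to obtain the companion decomposition $N\,\Delta_X={}^{t}Z_1+{}^{t}Z_2$ with ${}^{t}Z_1$ supported on $Y\times X$ and ${}^{t}Z_2$ on $X\times D$. I expect the construction of this decomposition --- in particular the passage to the geometric generic fibre and the spreading-out --- to be the only genuine difficulty; the remainder is formal Hodge theory.

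For the last step, let ${}^{t}Z_1$ and ${}^{t}Z_2$ act on $H^{p,0}(X)=H^0(X,\Omega^p_X)\subset H^p(X,\C)$. Since $(\Delta_X)_*$ is the identity on cohomology, it suffices to check that $({}^{t}Z_1)_*$ and $({}^{t}Z_2)_*$ both vanish on $H^{p,0}(X)$: this forces $N\cdot\mathrm{id}=0$ on $H^{p,0}(X)$, hence $h^{p,0}(X)=\dim_{\C}H^{p,0}(X)=0$. Writing ${}^{t}Z_1=(g_Y\times\mathrm{id}_X)_*\delta_1$ with $\delta_1$ a cycle on $\wt Y\times X$, viewed as a correspondence from $\wt Y$ to $X$, one computes $({}^{t}Z_1)_*\alpha=(\delta_1)_*(g_Y^*\alpha)$ for $\alpha\in H^{p,0}(X)$, and $g_Y^*\alpha\in H^0(\wt Y,\Omega^p_{\wt Y})=0$; so $({}^{t}Z_1)_*=0$ on $H^{p,0}(X)$. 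Writing ${}^{t}Z_2=(\mathrm{id}_X\times g_D)_*\delta_2$, where $g_D\colon\wt D\to X$ is a resolution of $D$ and $\delta_2$ a cycle on $X\times\wt D$ viewed as a correspondence from $X$ to $\wt D$, one computes $({}^{t}Z_2)_*\alpha=(g_D)_*\big((\delta_2)_*\alpha\big)$; here $\delta_2\in A^{c}(X\times\wt D)_\QQ$ with $c=\dim\wt D\le n-1<n=\dim X$, so $(\delta_2)_*$ is a morphism of Hodge structures of bidegree $(c-n,c-n)$ with $c-n<0$ and therefore sends $H^{p,0}(X)$ into $H^{p+c-n,\,c-n}(\wt D)=0$; so $({}^{t}Z_2)_*=0$ on $H^{p,0}(X)$. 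This completes the plan. Note that the first vanishing is where the bound $p>r$ is used, while the second uses only that $D$ is a proper subvariety --- the transposition of the Bloch--Srinivas decomposition is precisely what converts the ``bad divisor'' factor into a correspondence of negative Hodge bidegree.
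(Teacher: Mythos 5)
Your proof is correct and is essentially the approach the paper itself is built on: the paper cites [J2] for this statement rather than reproving it, but its lemma \ref{diag} (with $\ell=0$, no boundary) produces exactly your decomposition $N\Delta=Z_1+Z_2$, and its lemma \ref{factor} plus the Hodge-type analysis in the proof of proposition \ref{precisemum} is the same ``factor through a resolution of the support and use bidegree/level considerations'' argument you give for killing the two pieces on $H^{p,0}$. The only sketchy point, which you correctly flag, is the spreading-out/injectivity-under-field-extension step, which the paper handles via lemmas \ref{lim} and \ref{inj}.
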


In proving Mumford--type theorems such as this one, the approach of Bloch--Srinivas \cite{BS} has become hugely influential. (The curious reader is invited to look at \cite{Vo} for a fairly comprehensive overview of this circle of ideas, including many exciting subsequent developments it has spawned)
 In brief, the Bloch--Srinivas method consists of decomposing the diagonal, given some input on the level of Chow groups. Then, the action of this decomposition seen as a correspondence turns out to have many consequences on the level of cohomology. 
 
Because of the formalism of correspondences being used, the Bloch--Srinivas method is usually restricted to smooth projective varieties. In this note, on the other hand, we show this method can also be made to work for singular and quasi--projective varieties. The idea is very elementary: if $X$ is a (possibly singular) projective variety of dimension $n$, a correspondence is defined as a cycle $C\in A_n(X\times X)_{\QQ}$. A correspondence defines an action
  \[C_\ast\colon H^j(X,\QQ)\to H_{2n-j}(X,\QQ)\]
  in a natural way. If $C$ is the diagonal, this action is just the natural map (capping with the fundamental class of $X$). It follows that, once we have a decomposition of the diagonal, this will have consequences for
  \[  \ima\bigl( H^j(X,\QQ)\to H_{2n-j}(X,\QQ)\bigr)\ .\]
It turns out that in certain degrees (depending on the dimension of the singular locus), this image is well--understood: it is exactly the subgroup $W_{j-2n}H_{2n-j}(X,\QQ)$ where $W_\ast$ is Deligne's weight filtration (this is proven using intersection homology, cf. lemma \ref{durf}).

We give two applications of this elementary observation. The first is a new version of Mumford's theorem:

\begin{proposition} Let $X$ be a quasi--projective variety of dimension $n$. Suppose
      \[\hbox{Niveau}(A_iX_{\QQ})\le r\ \ \hbox{for\ all\ }i\ ,\]
      and suppose there exists a compactification of $X$ with singular locus of dimension $\le {n+r+1\over 3}$.
 Then
   \[  \gr^k_F W_{-j}H_j(X,\C)=0\ \ \hbox{provided\ }\vert 2k+j\vert >r  \ .\] 
   \end{proposition}

Here, the hypothesis ``Niveau $(A_iX_{\QQ})\le r$'' means that the Chow group $A_iX_{\QQ}$ is supported on an $(i+r)$--dimensional subvariety.
It should be noted that Lewis has obtained several Mumford--type theorems for singular varieties \cite{L2}; his statements and method are somewhat different from the present note.\footnote{The statements in \cite{L2} are considerably sharper than the one obtained in the present note. On the other hand, Lewis gets by by supposing the generalized Hodge conjecture or the Lefschetz standard conjecture hold universally. The aim of the present note is (1) to see how far one could get unconditionally, (2) extending the Bloch--Srinivas argument to singular varieties.}

The second application concerns the Hodge conjecture (as extended to singular and quasi--projective varieties in \cite{J}):

\begin{proposition}\label{hodge} Let $X$ be a quasi--projective variety of dimension $n$, and suppose there exists a compactification with singular locus of dimension $\le {n+4\over 3}$. Suppose
  \[\hbox{Niveau}(A_iX_{\QQ})\le 3\ \ \hbox{for\ all\ }i\le \ell\ .\]
 Then the cycle class map
   \[  A_jX_{\QQ}\ \to\ W_{-2j}H_{2j}(X,\QQ)\cap F^{-j}H_{2j}(X,\C)\]
   is surjective for $j\le \ell+2$. 
 \end{proposition}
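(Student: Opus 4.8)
The plan is to carry the Bloch--Srinivas method over to a compactification of $X$ and feed the resulting decomposition of the diagonal into Lemma~\ref{durf}. First I would reduce to the projective case. Fix a compactification $X\subset\bar X$ with $s:=\dim\mathrm{Sing}(\bar X)\le(n+4)/3$. Since $\bar X\setminus X$ has dimension $<n$, the boundary terms in the localization sequence for Borel--Moore homology have weights $>-2j$, so restriction identifies $W_{-2j}H_{2j}(X,\QQ)$ with a quotient Hodge structure of $W_{-2j}H_{2j}(\bar X,\QQ)$; a surjection of pure Hodge structures is strict for $F$, so it carries $W_{-2j}H_{2j}(\bar X)\cap F^{-j}$ onto $W_{-2j}H_{2j}(X)\cap F^{-j}$. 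As $A_j\bar X_{\QQ}\twoheadrightarrow A_jX_{\QQ}$ compatibly with restriction of cycle classes, it suffices to treat $\bar X$; and the niveau hypothesis for $A_i\bar X_{\QQ}$, $i\le\ell$, follows from the one for $X$, again using $\dim(\bar X\setminus X)<n$ via the localization sequence.

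Next I would decompose the diagonal of $\bar X$. The Bloch--Srinivas spreading argument \cite{BS} (see also \cite{Vo}) involves only the localization sequence for Chow groups and cycles over function fields of subvarieties, so it is insensitive to singularities; iterating it $\ell+1$ times with the niveau hypothesis gives, for some $m\in\NN$,
\[
 m\,\Delta_{\bar X}=\sum_{i=0}^{\ell}\Gamma_i+\Gamma_{\ell+1}\quad\text{in}\ A_n(\bar X\times\bar X)_{\QQ},
\]
with $\Gamma_i$ ($0\le i\le\ell$) supported on $Z_i\times V_i$, $\dim Z_i\le n-i$, $\dim V_i\le i+3$, and $\Gamma_{\ell+1}$ supported on $Z_{\ell+1}\times\bar X$, $\dim Z_{\ell+1}\le n-\ell-1$. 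Desingularising, $\Gamma_i$ is pushed forward from a cycle $\wt\Gamma_i$ on $\wt Z_i\times\wt V_i$, so $(\Gamma_i)_*$ acting on $H^{2n-2j}(\bar X)$ factors as $H^{2n-2j}(\bar X)\to H^{2n-2j}(\wt Z_i)\xrightarrow{(\wt\Gamma_i)_*}H^{2\dim V_i-2j}(\wt V_i)\to H_{2j}(\bar X)$, the outer maps being restriction and Gysin pushforward. Now fix $j\le\ell+2$ and $\beta\in W_{-2j}H_{2j}(\bar X,\QQ)\cap F^{-j}H_{2j}(\bar X,\C)$ (of Hodge type $(-j,-j)$, being rational). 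By Lemma~\ref{durf} --- valid in degree $2n-2j$ thanks to the bound on $s$ --- one has $\beta=\Delta_{\bar X,*}\wt\beta$ for some $\wt\beta$. The trick is to choose $\wt\beta$ well: since $\Delta_{\bar X,*}$ is a morphism of mixed Hodge structures landing in the pure part $W_{-2j}H_{2j}(\bar X)$, it kills $W_{2n-2j-1}H^{2n-2j}(\bar X)$ and induces a surjection of polarisable pure Hodge structures $\mathrm{Gr}^W_{2n-2j}H^{2n-2j}(\bar X)(n)\twoheadrightarrow W_{-2j}H_{2j}(\bar X)$; by semisimplicity this splits, so I may pick $\wt\beta\in H^{2n-2j}(\bar X,\QQ)$ whose image in $\mathrm{Gr}^W_{2n-2j}H^{2n-2j}(\bar X)$ is of type $(n-j,n-j)$. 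The crucial consequence is that $f^*\wt\beta$ is a rational Hodge class for every map $f$ from a smooth projective variety to $\bar X$, because $f^*$ must factor through that top weight-graded piece.

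With this, I would show each term of $m\beta=\sum_i(\Gamma_i)_*\wt\beta+(\Gamma_{\ell+1})_*\wt\beta$ is an algebraic cycle class. Terms with $i>j$ vanish, since $(\Gamma_i)_*$ factors through $H^{2n-2j}(\wt Z_i)=0$ when $2\dim\wt Z_i\le 2(n-i)<2n-2j$; likewise $(\Gamma_{\ell+1})_*\wt\beta=0$ once $j\le\ell$. For $0\le i\le j$, the class $\wt\beta|_{\wt Z_i}$ is a rational Hodge class, hence so is $(\wt\Gamma_i)_*(\wt\beta|_{\wt Z_i})\in H^{2\dim V_i-2j}(\wt V_i)$; if $i\le j-2$ then $\dim V_i\le j+1$, so this Hodge class lies in cohomological degree $\le 2$ and is algebraic by the Lefschetz $(1,1)$ theorem, whence $(\Gamma_i)_*\wt\beta$ is a cycle class after Gysin pushforward. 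If $i\in\{j-1,j\}$, I would argue on the $\wt Z_i$-side instead: $\wt\beta|_{\wt Z_i}$ lies in $H^{2n-2j}(\wt Z_i)$ with $2n-2j\ge 2\dim\wt Z_i-2$, so hard Lefschetz plus Lefschetz $(1,1)$ make it algebraic, hence so is $(\Gamma_i)_*\wt\beta$. Finally, when $j=\ell+1$ (resp.\ $j=\ell+2$), the term $(\Gamma_{\ell+1})_*\wt\beta$ is the pushforward of $\wt\beta|_{\wt Z_{\ell+1}}$ from the top (resp.\ top-minus-two) cohomology of $\wt Z_{\ell+1}$, again a rational Hodge class there and hence algebraic; this is exactly where the bound $j\le\ell+2$ is forced, since $j=\ell+3$ would land in degree $2\dim\wt Z_{\ell+1}-4$. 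Summing and dividing by $m\in\QQ^{\times}$, $\beta$ is a cycle class on $\bar X$, hence by the first step on $X$.

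The hard part will be the accounting in the third step: making the two sources of algebraicity --- low-degree cohomology of the target varieties $\wt V_i$ (Lefschetz $(1,1)$) and near-top cohomology of the source varieties $\wt Z_i$ (hard Lefschetz) --- between them dispose of every index $i$ arising for a given $j\le\ell+2$, with $\Gamma_{\ell+1}$ calibrating the bound, all the while keeping everything inside the range where the Hodge conjecture is unconditional (this is precisely what the niveau bound $3$ buys). Two supporting points also need care: that Lemma~\ref{durf} really does apply in cohomological degree $2n-2j$ under the hypothesis $\dim\mathrm{Sing}(\bar X)\le(n+4)/3$, and that the correspondence formalism of the introduction --- action on Borel--Moore homology, factorisation through resolutions, and compatibility with $W_\bullet$, with $F^\bullet$, and with cycle class maps --- behaves for the singular $\bar X$, $Z_i$, $V_i$ exactly as in the smooth case.
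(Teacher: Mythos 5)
Your term-by-term analysis (Lefschetz $(1,1)$ on the low-degree cohomology of the small factors $\wt V_i$, near-top degree on the $\wt Z_i$, with $\Gamma_{\ell+1}$ forcing $j\le\ell+2$) is essentially the paper's Step 1, but two steps of your reduction do not hold as stated. First, the niveau hypothesis does \emph{not} transfer from $X$ to a compactification $\bar X$: the localization sequence $A_i(D)_{\QQ}\to A_i(\bar X)_{\QQ}\to A_i(X)_{\QQ}\to 0$ only shows that $A_i(\bar X)_{\QQ}$ is supported on $\bar Y\cup D$, and the boundary $D$ may well have dimension $n-1$. Consequently the Bloch--Srinivas spreading on $\bar X$ cannot give your clean decomposition $m\,\Delta_{\bar X}=\sum_i\Gamma_i+\Gamma_{\ell+1}$; an extra term $\Gamma$ supported on $\bar X\times D$ is unavoidable (this is exactly the shape of lemma \ref{diag}). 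For the same reason your opening claim ``it suffices to treat $\bar X$'' aims at a strictly stronger statement (algebraicity of Hodge classes in $W_{-2j}H_{2j}(\bar X)$) which the hypotheses do not yield. The repair is the paper's: keep $\Gamma$, observe that $\Gamma_\ast\wt\beta$ is supported on $D$, and kill it only at the end by restricting to $X$ via $\tau^\ast$ --- so the conclusion is drawn on $X$, not on $\bar X$.

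Second, and more seriously, lemma \ref{durf} is \emph{not} ``valid in degree $2n-2j$ thanks to the bound on $s$'' for all $j\le\ell+2$: parts (\rom2)--(\rom3) give the lift $\beta=b\cap[\bar X]$ with $b$ a Hodge class in $H^{2n-2j}(\bar X)$ only when $2n-2j\ge n+s$, i.e.\ $2j\le n-s$. For $2j$ in the remaining range your argument has no starting point, since the map $H^{2n-2j}(\bar X)\to H_{2j}(\bar X)$ need not hit the relevant Hodge classes. The paper covers this range by a genuinely different second step: with $S$ the singular locus of $X$ and $U=X\setminus S$, it proves the smooth case for $U$ (lemma \ref{smoothcase}), and notes that Hodge classes supported on $S$ are algebraic as soon as $2j\ge 2s-2$; the hypothesis $s\le(n+4)/3$ is used precisely so that the two ranges $2j\le n-s$ and $2j\ge 2s-2$ together cover $[0,2\ell+4]$ (see proposition \ref{precise}). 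Without a counterpart of this second step, your proof establishes the proposition only for $2j\le n-s$.
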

 
We present some examples where this can be applied (corollaries \ref{A0} and \ref{A1}).

\section{The Bloch--Srinivas argument}
\label{sec:1}

  \begin{definition} Let $X$ be a quasi--projective variety, and let $A_iX$ denote the Chow group of $i$--dimensional algebraic cycles. We say that 
    \[ \hbox{Niveau}\bigl( A_iX_{\QQ}\bigr)\le r\]
    if there exists a closed (i+r)--dimensional subvariety $Y\subset X$ such that $A_i(X\setminus Y)_{\QQ}=0$.
    \end{definition}

The key to what follows is the following decomposition lemma. This is the Bloch--Srinivas argument \cite{BS}; in his book, Bloch attributes this argument to Colliot--Th\'el\`ene \cite[appendix to lecture 1]{B}.

\begin{lemma}\label{diag} Let $\bar{X}$ be a projective variety of dimension $n$, and $X\subset\bar{X}$ the complement of a closed subvariety $D$. Suppose
  \[\hbox{Niveau}\bigl( A_iX_{\QQ}\bigr)\le r \ \ \ \ \hbox{for\ all\ $i\le \ell$\ .}\]
  Then there is a decomposition of the diagonal
  \[ \Delta=\Delta_0+\Delta_1+\cdots+\Delta_\ell+\Delta^{\ell+1}+\Gamma\ \ \in A_n(\bar{X}\times\bar{X})_{\QQ}\ ,\]
  where $\Delta_j$ is supported on $V_j\times W_j$, $\Delta^{\ell+1}$ is supported on $X\times W_{\ell+1}$, and $V_j\subset\bar{X}$ is of dimension $j+r$, $W_j\subset\bar{X}$ is of dimension $n-j$, and $\Gamma$ is supported on $D\times\bar{X}$.
\end{lemma}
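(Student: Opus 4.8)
The plan is to run the Bloch--Srinivas argument in an iterated form, carried out entirely inside $A_n(X\times\bar X)_\QQ$ rather than $A_n(\bar X\times\bar X)_\QQ$, so that the term $\Gamma$ supported on $D\times\bar X$ appears only at the end, as the error in lifting along the localization surjection $A_n(D\times\bar X)_\QQ\to A_n(\bar X\times\bar X)_\QQ\to A_n(X\times\bar X)_\QQ\to 0$. Concretely, it suffices to produce in $A_n(X\times\bar X)_\QQ$ a decomposition $\Delta\vert_{X\times\bar X}=\bar\Delta_0+\cdots+\bar\Delta_\ell+\bar\Delta^{\ell+1}$, where $\bar\Delta_j$ is supported on $Y_j\times W_j$ with $Y_j\subset X$ closed, $\dim Y_j\le j+r$ and $\dim W_j\le n-j$, and $\bar\Delta^{\ell+1}$ is supported on $X\times W_{\ell+1}$ with $\dim W_{\ell+1}\le n-\ell-1$. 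Granting this, one sets $V_j:=\overline{Y_j}\subset\bar X$ (still of dimension $\le j+r$) and lifts $\bar\Delta_j$ to a cycle $\Delta_j$ on $\bar X\times\bar X$ supported on $V_j\times W_j$ --- this is possible because $Y_j=V_j\cap X$ is open in $V_j$, so $A_n(V_j\times W_j)_\QQ\to A_n(Y_j\times W_j)_\QQ$ is onto; one lifts $\bar\Delta^{\ell+1}$ to $\Delta^{\ell+1}$ supported on $\bar X\times W_{\ell+1}$; and one defines $\Gamma:=\Delta-\sum_j\Delta_j-\Delta^{\ell+1}$, which restricts to $0$ on $X\times\bar X$ and hence lies in the image of $A_n(D\times\bar X)_\QQ$.

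The decomposition inside $A_n(X\times\bar X)_\QQ$ is built by induction on $m$, cutting down the dimension of the second-factor support at each step. Suppose one has $\Delta\vert_{X\times\bar X}=\bar\Delta_0+\cdots+\bar\Delta_{m-1}+R_m$ with the $\bar\Delta_j$ as above for $j<m$, and $R_m$ supported on $X\times W'_m$, $\dim W'_m\le n-m$ (the case $m=0$ being $R_0=\Delta\vert_{X\times\bar X}$, $W'_0=\bar X$). Write $W'_m=T_m\cup L_m$ with $T_m$ the union of the $(n-m)$--dimensional components and $\dim L_m\le n-m-1$, and split $R_m=R_m^T+R_m^L$ accordingly (using the localization sequence for $X\times L_m\subset X\times W'_m$). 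Now restrict $R_m^T$ to the generic point of each component $Z$ of $T_m$: since $\dim Z=n-m$, the result is a class in $A_m\bigl(X_{k(Z)}\bigr)_\QQ$, where $X_{k(Z)}$ denotes the base change of $X$ to the function field $k(Z)$. At this point the hypothesis $\hbox{Niveau}(A_mX_\QQ)\le r$ enters: it gives a closed $(m+r)$--dimensional $Y_m\subset X$ with $A_m(X\setminus Y_m)_\QQ=0$, and the standard spreading-out of vanishing of Chow groups to function-field extensions over $\C$ (the heart of the Bloch--Srinivas method, \cite{BS}, \cite{B}) yields $A_m\bigl((X\setminus Y_m)_{k(Z)}\bigr)_\QQ=0$; hence the restricted class is supported on $(Y_m)_{k(Z)}$. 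Spreading out once more, there is a dense open $U\subset T_m$ with $R_m^T\vert_{X\times U}$ supported on $Y_m\times U$; lifting this back to $A_n(X\times\bar X)_\QQ$ while keeping the support on $Y_m\times T_m$ defines $\bar\Delta_m$, and $R_{m+1}:=R_m^L+\bigl(R_m^T-\bar\Delta_m\bigr)$ is supported on $X\times W'_{m+1}$ with $W'_{m+1}:=L_m\cup(T_m\setminus U)$ of dimension $\le n-m-1$. Iterating for $m=0,\dots,\ell$ and putting $\bar\Delta^{\ell+1}:=R_{\ell+1}$ completes the construction. (Low-dimensional components of $W'_m$ are simply carried along: either a later step, when the dimension budget catches up, processes them, or they remain in $\bar\Delta^{\ell+1}$, where the bound $\le n-\ell-1$ still holds.)

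The step that really has content is the spreading-out: that $A_m(X\setminus Y_m)_\QQ=0$ forces $A_m\bigl((X\setminus Y_m)_F\bigr)_\QQ=0$ for every finitely generated field extension $F$ of $\C$. For $m=0$ this is exactly the classical Bloch--Srinivas input, and is where one uses that the ground field is a universal domain; for $m>0$ it is the corresponding statement for higher-dimensional cycles, which is available in the literature on the iterated Bloch--Srinivas argument and which I would cite rather than reprove. Everything else is routine: the ``restrict to the generic point / spread out'' dictionary is the usual continuity of Chow groups under filtered limits of open subsets; reducible $W'_m$ are handled component by component, shrinking the opens so they become pairwise disjoint; and the phrase ``$\Delta^{\ell+1}$ supported on $X\times W_{\ell+1}$'' is to be read via the restriction to $X\times\bar X$ (the lift $\Delta^{\ell+1}$ is literally supported on $\bar X\times W_{\ell+1}$, the discrepancy being absorbed into $\Gamma$, and only the restriction to $X\times X$ is used in the applications).
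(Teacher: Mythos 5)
Your proposal is correct and is essentially the paper's own argument: iterated restriction of the remaining piece to the generic points of its second-factor support, the Niveau hypothesis transferred to function fields, and localization to peel off the pieces $\Delta_j$ --- the only differences are bookkeeping (you work in $A_n(X\times\bar X)_{\QQ}$ throughout and extract the single error term $\Gamma$ on $D\times\bar X$ at the end, rather than collecting $D\times\bar X$-pieces at each step, and you handle reducible supports component by component). One caveat: the input you propose to cite from \cite{BS} and \cite{B} (the limit description $A_i(X_{k(Z)})\cong\varinjlim A_{i+\dim Z}(X\times U)$ and injectivity of Chow groups under field extension, which together give your ``spreading-out of vanishing'') is stated in those references for smooth (projective) varieties, whereas here $X$ may be singular; the paper supplies exactly this extension in lemmas \ref{lim} and \ref{inj} via resolution of singularities and descent, and your write-up should include or invoke that small extra step rather than the smooth-case references alone.
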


\begin{proof} This is an application of the Bloch--Srinivas method \cite{BS}. 
We use the following two well--known lemmas:

\begin{lemma}\label{lim} Let $X$ and $Z$ be quasi--projective varieties, and suppose $Z$ is irreducible of dimension $n$. Then for any $i$
  \[  A_i(X_{k(Z)})\cong \varinjlim A_{i+n}(X\times U)\ ,\]
  where the limit is taken over opens $U\subset Z$.
  \end{lemma}
  
  \begin{proof} This is usually stated for smooth projective varieties \cite[appendix to Lecture 1]{B}. If one is brave, one goes checking in Quillen's work to see that the proof given in loc. cit. for the smooth case still goes on for singular varieties. Alternatively, take a resolution of singularities and reduce to the smooth case using the ``descent'' exact sequences, and the fact that $\varinjlim$ is an exact functor.
 \end{proof}
 
\begin{lemma}\label{inj} Let $X$ be a quasi--projective variety defined over a field $k$, and let $k\subset K$ be a field extension. Then
  \[A_i(X_k)_{\QQ}\to A_i(X_K)_{\QQ}\]
  is injective.
  \end{lemma}
  
  \begin{proof} This is usually stated for smooth varieties \cite[appendix to Lecture 1]{B}, but the same argument works in general: use lemma \ref{lim} to reduce to the case of a finite extension. For a finite extension, take a resolution of singularities; for smooth varieties, the existence of the norm implies the extension map is a split injection; by descent, the same is true for singular varieties.
 \end{proof}

Now we proceed with the proof of lemma \ref{diag}.
We can reduce to some subfield $k\subset\C$ which is finitely generated over its prime subfield (that is, we may suppose $X$ and $\bar{X}$ and the various subvarieties supporting the $A_iX_{\QQ}$ are defined over $k$). Consider the restriction
  \[ \Delta\in A_n(\bar{X}\times\bar{X})_{\QQ}\to A_n(X\times\bar{X})_{\QQ}\to \varinjlim A_n(X\times U)_{\QQ}=A_0(X_{k(\bar{X})})_{\QQ}\ \]
  (where the $U$ run over opens of $\bar{X}$).
  But
    \[A_0(X_{k(\bar{X})})_{\QQ}\to A_0(X_{\C})_{\QQ}\]
    is injective (lemma \ref{inj}), so $A_0(X_{k(\bar{X})})_{\QQ}$ is supported in dimension $r$. It follows that we get a rational equivalence
    \[ \Delta=\Delta_0+\Delta^1+\Gamma^1\ \ \in A_n(\bar{X}\times\bar{X})_{\QQ}\ ,\]
    where $\Delta_0$ is supported on $V_0\times\bar{X}$, and $\Delta^1$ is supported on $\bar{X}\times W_1$ for some divisor $W_1$, and $\Gamma_1$ is supported on $D\times\bar{X}$. 
    
 If $\ell=0$ we are done. If not, we consider the restriction of the element $\Delta^1$
   \[ \Delta^1\in A_n(\bar{X}\times W_1)_{\QQ}\to A_n(X\times W_1)_{\QQ}\to A_1(X_{k(W_1)})_{\QQ} \ ,\]
   and we use the hypothesis on $A_1(X_{\C})_{\QQ}$.   
    
 Continuing the same process, after $\ell+1$ steps we end up with a decomposition
as desired.   
\end{proof}

Next, we consider correspondences for possibly singular projective varieties:

\begin{definition}\label{corr} Let $X$ be a projective variety of dimension $n$, and $C\in A_n(X\times X)_{\QQ}$. Then $C$ induces an action
  \[ C_\ast\colon\ H^j(X,\QQ)\ \to\ H_{2n-j}(X,\QQ)\ ,\] 
  defined as follows: for $b\in H^j(X,\QQ)$, let
    \[ C_\ast(b):=   (p_2)_\ast \bigl( (p_1)^\ast(b)\cap [C]\bigr)\ \ \in H_{2n-j}(X,\QQ)\ ,\]
  where $p_1$ and $p_2$ denote projections on the first resp. second factor.
  \end{definition}

  This ``correspondence action'' has the following properties (which are well--known, and oft exploited, in the smooth case):    
  
  \begin{lemma}\label{cap} Let $X$ be a projective variety of dimension $n$, and let $\Delta\in A_n(X\times X)$ be the diagonal. Then
    \[ \Delta_\ast b=b\cap[X]\ \ \in H_{2n-j}(X,\QQ)\]
    for any $b\in H^j(X,\QQ)$.
    \end{lemma}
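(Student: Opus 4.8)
The plan is to unwind the definition of $\Delta_\ast$ and reduce the statement to the projection formula for the closed embedding $\delta\colon X\to X\times X$, $x\mapsto(x,x)$, whose image is the diagonal $\Delta$. First I would record the three structural facts about $\delta$ that will be used: it is proper (a closed embedding of projective varieties); it satisfies $p_1\circ\delta=p_2\circ\delta=\mathrm{id}_X$; and its fundamental class pushes forward to the cycle class of the diagonal, i.e. $[\Delta]=\delta_\ast[X]$ in $H_{2n}(X\times X,\QQ)$, by compatibility of the cycle class map with proper pushforward.

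Next, starting from $\Delta_\ast(b)=(p_2)_\ast\bigl((p_1)^\ast(b)\cap[\Delta]\bigr)$, I would substitute $[\Delta]=\delta_\ast[X]$ and apply the projection formula for the proper map $\delta$: for $a\in H^\ast(X\times X,\QQ)$ one has $a\cap\delta_\ast[X]=\delta_\ast\bigl(\delta^\ast a\cap[X]\bigr)$. Taking $a=(p_1)^\ast(b)$ and using $\delta^\ast(p_1)^\ast=(p_1\circ\delta)^\ast=\mathrm{id}^\ast$ yields $(p_1)^\ast(b)\cap[\Delta]=\delta_\ast\bigl(b\cap[X]\bigr)$. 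Finally I would push forward along $p_2$: by functoriality of proper pushforward, $(p_2)_\ast\delta_\ast\bigl(b\cap[X]\bigr)=(p_2\circ\delta)_\ast\bigl(b\cap[X]\bigr)=b\cap[X]$, which is exactly the claimed identity in $H_{2n-j}(X,\QQ)$.

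The only points that require care — and where one must make sure the singular setting causes no trouble — are that all the ingredients (the fundamental class $[X]\in H_{2n}(X,\QQ)$, the cap product $H^j\otimes H_k\to H_{k-j}$, the projection formula for proper maps, and the compatibility $[\Delta]=\delta_\ast[X]$) are available for an arbitrary projective variety. This is standard: for a compact space Borel--Moore homology agrees with singular homology, the cap product and its naturality/projection formula hold in this generality, and the cycle class map commutes with proper pushforward. So I do not expect a genuine obstacle here, only bookkeeping; the content of the lemma is precisely the observation that, for the diagonal, the correspondence action of Definition \ref{corr} collapses to capping with $[X]$.
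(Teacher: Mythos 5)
Your proof is correct, but it follows a different route from the paper. You work directly with the diagonal embedding $\delta\colon X\to X\times X$: since $\delta$ is a closed embedding with $p_i\circ\delta=\mathrm{id}_X$ and $[\Delta]=\delta_\ast[X]$ in $H_{2n}(X\times X,\QQ)$, the projection formula (naturality of the cap product) gives $(p_1)^\ast(b)\cap[\Delta]=\delta_\ast(b\cap[X])$ and then $(p_2)_\ast\delta_\ast=\mathrm{id}$ finishes the computation; all ingredients are available in Borel--Moore/singular homology for an arbitrary projective variety. The paper instead takes a resolution of singularities $f\colon\wt{X}\to X$, uses that the diagonal cycle of $X$ is the pushforward of the diagonal of $\wt{X}$ under $f\times f$, applies the projection formula to $f\times f$, and invokes the (smooth) identity $\wt{\Delta}_\ast=\cdot\cap[\wt{X}]$ on $\wt{X}$ before pushing back down via $f_\ast[\wt{X}]=[X]$. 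Your argument is the more economical of the two: it avoids resolution of singularities altogether and uses only the formal package (fundamental class, cap product naturality, compatibility of the cycle class map with proper pushforward), which also makes it transparently portable to other theories such as operational Chow cohomology as in Remark \ref{alsoA}. The paper's detour through $\wt{X}$ buys nothing extra here beyond reducing to the familiar smooth statement; the one point your version must (and does) make explicit is the identification $[\Delta]=\delta_\ast[X]$, which in the paper is replaced by the analogous implicit identification $\Delta=(f\times f)_\ast\wt{\Delta}$ at the level of cycles.
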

        
  \begin{proof} Let $f\colon\wt{X}\to {X}$ be a resolution of singularities, and let $\wt{\Delta}$ denote the diagonal of $\wt{X}$. Then
     \[   \begin{split} \Delta_\ast(b)&:=(p_2)_\ast\bigl((p_1)^\ast b\cap\Delta\bigr)\\
                                        &=(p_2)_\ast f_\ast \bigl( f^\ast(p_1)^\ast b\cap\wt{\Delta}\bigr)\\
                                        &=f_\ast \wt{\Delta}_\ast(f^\ast b)\\
                                            &=f_\ast(f^\ast b\cap[\wt{X}])=b\cap [{X}].\end{split}\]

  \end{proof}

 \begin{lemma}\label{factor} Let $X$ be a projective variety of dimension $n$, and suppose $C\in A_n(X\times X)_{\QQ}$ is the image of a cycle $c\in A_n(V\times W)_{\QQ}$, for some closed subvarieties $V$ and $W$ in $X$.
   Then there exists a factorization
   \[ 
     \begin{array}[c]{ccc}
        H^j(\widetilde{V}\times\widetilde{W},\QQ)&\stackrel{\cdot[\widetilde{c}]}{\to}&H_{2n-j}(\widetilde{V}\times\widetilde{W},\QQ)  \\
        \uparrow&&\downarrow\\
        H^{j}(\widetilde{V},\QQ)&& H_{2n-j}(\widetilde{W},\QQ)     \\
        \uparrow&&\downarrow\\
        H^{j}({X},\QQ)&\ \ \stackrel{C_\ast}{\to} \ \  &H_{2n-j}({X},\QQ)\ \\
        \end{array}\]         
  (where $\widetilde{V}$ and $\widetilde{W}$ denote resolutions of singularities, and $\widetilde{c}\in A_n(\widetilde{V}\times\widetilde{W})_{\QQ}$ is any cycle mapping to $c$).
  \end{lemma}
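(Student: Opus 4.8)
The plan is to build the factorization functorially from the definition of $C_\ast$, pushing and pulling through the resolutions $\widetilde V\to V\hookrightarrow X$ and $\widetilde W\to W\hookrightarrow X$. Write $g\colon \widetilde V\to X$ and $h\colon \widetilde W\to X$ for the composites of the resolution maps with the inclusions, and let $G\colon \widetilde V\times\widetilde W\to X\times X$ be $g\times h$; by hypothesis $C=G_\ast(\widetilde c)$ for a cycle $\widetilde c\in A_n(\widetilde V\times\widetilde W)_{\QQ}$ lifting $c$. The vertical maps in the diagram are: on the left, $g^\ast\colon H^j(X,\QQ)\to H^j(\widetilde V,\QQ)$ followed by $(\mathrm{pr}_1)^\ast\colon H^j(\widetilde V,\QQ)\to H^j(\widetilde V\times\widetilde W,\QQ)$; on the right, $(\mathrm{pr}_2)_\ast\colon H_{2n-j}(\widetilde V\times\widetilde W,\QQ)\to H_{2n-j}(\widetilde W,\QQ)$ followed by $h_\ast\colon H_{2n-j}(\widetilde W,\QQ)\to H_{2n-j}(X,\QQ)$. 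So the claim is exactly that $C_\ast = h_\ast\circ(\mathrm{pr}_2)_\ast\circ\bigl((-)\cap[\widetilde c]\bigr)\circ(\mathrm{pr}_1)^\ast\circ g^\ast$.

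First I would expand $C_\ast(b)=(p_2)_\ast\bigl((p_1)^\ast b\cap [C]\bigr)$ and substitute $[C]=G_\ast[\widetilde c]$. Then the key is the projection formula for the proper map $G$: $(p_1)^\ast b\cap G_\ast[\widetilde c]=G_\ast\bigl(G^\ast(p_1)^\ast b\cap[\widetilde c]\bigr)$, valid in Borel--Moore homology with the cap-product pairing against cohomology. Combined with $p_2\circ G=h\circ\mathrm{pr}_2$ (so that $(p_2)_\ast G_\ast=h_\ast(\mathrm{pr}_2)_\ast$) and $p_1\circ G=g\circ\mathrm{pr}_1$ (so that $G^\ast(p_1)^\ast=(\mathrm{pr}_1)^\ast g^\ast$), this rewrites $C_\ast(b)$ as $h_\ast(\mathrm{pr}_2)_\ast\bigl((\mathrm{pr}_1)^\ast g^\ast b\cap[\widetilde c]\bigr)$, which is precisely the composite read off the diagram. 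Note the argument closely parallels the proof of Lemma \ref{cap}, where the same projection-formula manipulation is carried out for the resolution $f\colon\widetilde X\to X$ and the diagonal; here one merely allows the source and target resolutions to differ and the cycle to be supported on $V\times W$.

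The main point requiring care is the validity of the projection formula in this mixed setting: $b$ lives in ordinary cohomology $H^j(X,\QQ)$ while $[C]$ lives in Borel--Moore homology $H_n^{BM}(X\times X,\QQ)=A_n(X\times X)_{\QQ}$'s target (for $X\times X$ projective, Borel--Moore and ordinary homology agree, but the cap product $H^j\otimes H_{2n}\to H_{2n-j}$ is the relevant pairing). One needs that for a proper morphism $G$ and classes $\beta\in H^\ast(X\times X)$, $\xi\in H_\ast^{BM}(\widetilde V\times\widetilde W)$, the identity $\beta\cap G_\ast\xi=G_\ast(G^\ast\beta\cap\xi)$ holds; this is standard (e.g. Fulton, or the compatibility of the cap product with proper pushforward), but since the paper works with possibly singular varieties I would state it as a known property of Borel--Moore homology rather than reprove it. The functoriality of pullback ($G^\ast p_1^\ast=(p_1\circ G)^\ast$) and of proper pushforward ($(p_2)_\ast G_\ast=(p_2\circ G)_\ast$) are immediate. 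Everything else is formal, so the proof is short; the only real obstacle is bookkeeping the maps correctly so that the two vertical columns of the displayed diagram are assembled in the right order.

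\begin{proof} Write $g\colon \widetilde V\to X$ and $h\colon \widetilde W\to X$ for the resolution maps composed with the inclusions $V,W\hookrightarrow X$, and set $G:=g\times h\colon \widetilde V\times\widetilde W\to X\times X$. By assumption $[C]=G_\ast[\widetilde c]$ in $H_{2n}(X\times X,\QQ)$. For $b\in H^j(X,\QQ)$, the projection formula for the proper morphism $G$ gives
  \[ (p_1)^\ast b\cap [C]=(p_1)^\ast b\cap G_\ast[\widetilde c]=G_\ast\bigl(G^\ast(p_1)^\ast b\cap[\widetilde c]\bigr)\ .\]
Since $p_1\circ G=g\circ\mathrm{pr}_1$ and $p_2\circ G=h\circ\mathrm{pr}_2$, functoriality of pullback and proper pushforward yields
  \[ C_\ast(b)=(p_2)_\ast\bigl((p_1)^\ast b\cap[C]\bigr)=(p_2)_\ast G_\ast\bigl((\mathrm{pr}_1)^\ast g^\ast b\cap[\widetilde c]\bigr)=h_\ast(\mathrm{pr}_2)_\ast\bigl((\mathrm{pr}_1)^\ast g^\ast b\cap[\widetilde c]\bigr)\ .\]
Reading this composite against the claimed diagram: $g^\ast$ is the lower-left vertical map $H^j(X,\QQ)\to H^j(\widetilde V,\QQ)$, then $(\mathrm{pr}_1)^\ast$ is the upper-left vertical map into $H^j(\widetilde V\times\widetilde W,\QQ)$, then $\cdot[\widetilde c]$ is the top horizontal map, then $(\mathrm{pr}_2)_\ast$ is the upper-right vertical map to $H_{2n-j}(\widetilde W,\QQ)$, and finally $h_\ast$ is the lower-right vertical map to $H_{2n-j}(X,\QQ)$. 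This establishes the factorization.
\end{proof}
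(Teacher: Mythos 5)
Your proposal is correct and follows essentially the same argument as the paper: substitute $[C]=(g\times h)_\ast[\widetilde c]$, apply the projection formula for the proper map $g\times h$, and use functoriality of pullback and pushforward to read off the factorization (the paper's $\psi,\phi$ are your $g,h$). No gaps; the identification of the vertical maps in the diagram matches the paper's intended reading.
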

  
  \begin{proof} 
  This is a formality. Let
    \[\begin{split}
            &\psi\colon \ \widetilde{V}\to X ,\\
           & \phi\colon \ \widetilde{W}\to X\\  \end{split}\]
            denote the compositions of the resolution morphism with the inclusion morphism. Let $q_1$ and $q_2$ denote the projection from $\widetilde{V}\times\widetilde{W}$ to the first resp. second factor.
            Then for any $b\in H^j(X,\QQ)$,
              \[\begin{split}
                 C_\ast(b):=&(p_2)_\ast \Bigl( (p_1)^\ast(b)\cap [C]\Bigr)\\
                                 =&(p_2)_\ast \Bigl( (p_1)^\ast(b)\cap (\psi\times\phi)_\ast [\widetilde{c}]\Bigr)\\
                                 =&(p_2)_\ast (\psi\times\phi)_\ast\Bigl( (\psi\times\phi)^\ast(p_1)^\ast(b)\cap [\widetilde{c}]\Bigr)\\
                                 =&\phi_\ast(q_2)_\ast\Bigl( (q_1)^\ast\psi^\ast(b)\cap[\widetilde{c}]\Bigr)\ .\\
                                 \end{split}\]  
   \end{proof}
  
  \begin{remark}\label{alsoA} Naturally, definition \ref{corr} extends to other cohomology theories. For instance, if $A^\ast$ denotes the operational Chow cohomology of Fulton--MacPherson \cite{F}, any correspondence $C\in A_n(X\times X)_{\QQ}$ defines an action
    \[ C_\ast\colon\ A^iX_{\QQ}\to A_{n-i}(X)_{\QQ}\ .\]
    The lemmas \ref{cap} and \ref{factor} still hold in this context (indeed, the proofs are the same; they only use formal properties of cohomology/homology).
    \end{remark}

\section{Mumford theorem}


\begin{definition} Let $X$ be a quasi--projective variety. We let $W_\ast$ and $F^\ast$ denote the weight filtration, resp. the Hodge filtration, on cohomology and on homology of $X$ \cite{PS}.
\end{definition}

\begin{proposition} Let $X$ be a quasi--projective variety of dimension $n$. Suppose
      \[\hbox{Niveau}(A_iX_{\QQ})\le r\ \ \hbox{for\ all\ }i\ ,\]
      and suppose there exists a compactification of $X$ with singular locus of dimension $\le {n+r+1\over 3}$.
 Then
   \[  \gr^k_F W_{-j}H_j(X,\C)=0\ \ \hbox{provided\ }\vert 2k+j\vert >r  \ .\] 
   \end{proposition}
 
 This follows from a more precise version:
 
 \begin{proposition}\label{precisemum} Let $X$ be a quasi--projective variety of dimension $n$, and suppose there exists a compactification of $X$ with singular locus of dimension $\le  s$. Suppose
      \[\hbox{Niveau}(A_iX_{\QQ})\le r\ \ \hbox{for\ all\ }i\le \ell\ .\]
Let $j\in[0,n-s]\cap[2s-r,2n]$. Then
  \[  \gr^k_F W_{-j}H_j(X,\C)=0\ \ \hbox{provided\ }\vert 2k+j\vert >r  \ .\] 
\end{proposition}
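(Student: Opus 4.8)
The plan is to run the Bloch--Srinivas machine from Lemma \ref{diag} through the correspondence action of Definition \ref{corr}, using Lemma \ref{cap} to identify the diagonal action and Lemma \ref{factor} to control the pieces $\Delta_j$, $\Delta^{\ell+1}$, $\Gamma$. First I would pick a compactification $\bar X\supset X$ with $\dim(\bar X_{\mathrm{sing}})\le s$, and write $D=\bar X\setminus X$. Since the Niveau hypothesis holds for $A_iX_{\QQ}$ for $i\le\ell$, Lemma \ref{diag} yields a decomposition $\Delta=\Delta_0+\cdots+\Delta_\ell+\Delta^{\ell+1}+\Gamma$ in $A_n(\bar X\times\bar X)_{\QQ}$, with the stated support conditions ($\Delta_j$ on $V_j\times W_j$ with $\dim V_j=j+r$, $\dim W_j=n-j$; $\Delta^{\ell+1}$ on $X\times W_{\ell+1}$; $\Gamma$ on $D\times\bar X$). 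Applying $(\ )_\ast$ on $H^{2n-j}(\bar X,\QQ)$ and using Lemma \ref{cap}, the left-hand side becomes capping with $[\bar X]$, i.e. the natural map $H^{2n-j}(\bar X,\QQ)\to H_{j}(\bar X,\QQ)$; hence its image is the sum of the images of the $(\Delta_k)_\ast$ and $\Gamma_\ast$. The key input — this is the analogue of the intersection-homology lemma \ref{durf} referred to in the introduction — is that for $j$ in the stated range $[0,n-s]\cap[2s-r,2n]$, the image of $H^{2n-j}(\bar X,\QQ)\to H_j(\bar X,\QQ)$ is exactly $W_{-j}H_j(\bar X,\QQ)$, because in that range the map factors through intersection homology $IH$ and $IH\to H$ (resp. $H\to IH$) has image precisely the lowest (resp. is injective on the lowest) weight piece by the theory of Poincaré--Lefschetz duality for $IH$ on possibly singular varieties.

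Next I would bound the weights coming from each summand via Lemma \ref{factor}. For $\Delta_k$ supported on $V_k\times W_k$, the map $(\Delta_k)_\ast$ on $H^{2n-j}$ factors through $H_{j}(\widetilde W_k,\QQ)$ with $\dim W_k=n-k$, pushed forward along $\phi\colon\widetilde W_k\to\bar X$; but it also factors through $H^{2n-j}(\widetilde V_k,\QQ)$ with $\dim V_k=k+r$. A cohomology class on a smooth variety of dimension $d$ in degree $>2d$ vanishes, and more to the point, a Hodge/weight-graded piece $\gr^k_F$ of $H_j$ of the image is constrained by the Hodge-theoretic size of $H^{2n-j}(\widetilde V_k)$: on the smooth projective (or smooth, if $V_k$ is not proper — then use a smooth compactification) variety $\widetilde V_k$ of dimension $k+r$, the class $(2n-j)$-cohomology and its image in homology can only contribute to $\gr^p_F\gr^W_m$ with the relevant indices bounded by $k+r$; chasing the shift in the factorization diagram, the contribution to $\gr^k_F W_{-j}H_j(\bar X,\C)$ is forced to vanish once $|2k+j|>r$. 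The $\Gamma$ term is supported on $D\times\bar X$, so $\Gamma_\ast$ factors through $H^{2n-j}(\widetilde D,\QQ)$ with $\dim D\le n-1$; since $X$ is obtained by removing $D$, the Gysin/excision sequence shows that after restricting to $H_j(X,\QQ)$ this contribution dies (the point being that passing from $\bar X$ to $X$ kills exactly the image of homology of $D$). Finally $\Delta^{\ell+1}$ is supported on $X\times W_{\ell+1}$; when $j\le\ell+$(something) it is subsumed by the $\Delta_k$ analysis, and for the range of $j$ in the statement one checks it contributes nothing to $W_{-j}H_j(X)$.

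Assembling: restricting everything to $H_j(X,\C)$ and taking $\gr^k_F$ of the weight piece $W_{-j}$, the diagonal gives all of $\gr^k_F W_{-j}H_j(X,\C)$ (by the $IH$ identification and compatibility of weights with the open restriction $H_j(\bar X)\to H_j(X)$, or rather the relevant piece of it), while every summand $\Delta_k$, $\Delta^{\ell+1}$, $\Gamma$ contributes $0$ there as soon as $|2k+j|>r$. Hence $\gr^k_F W_{-j}H_j(X,\C)=0$ in that range, which is exactly the claim.

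I expect the main obstacle to be the identification, valid precisely on the interval $[0,n-s]\cap[2s-r,2n]$, of $\ima\!\bigl(H^{2n-j}(\bar X,\QQ)\to H_j(\bar X,\QQ)\bigr)$ with the weight piece $W_{-j}H_j(\bar X,\QQ)$, together with checking that this identification is compatible with the restriction to the open $X$ and with taking $\gr^k_F$ — this is where the dimension-of-singular-locus hypothesis $s\le\frac{n+r+1}{3}$ (hence the factor $3$) really gets used, via the range in which intersection homology and ordinary homology agree in the relevant degrees. The bookkeeping of Hodge and weight indices through the three-row factorization diagram of Lemma \ref{factor} (keeping straight that $H_{2n-j}$ of a $d$-dimensional smooth variety carries weights in a controlled window, and translating the bound $\dim V_k=k+r$ into the inequality $|2k+j|>r$) is routine but must be done carefully, especially handling the possibility that $V_k$, $W_k$ are themselves non-proper so that one works with smooth (non-compact) models and mixed Hodge structures rather than pure ones.
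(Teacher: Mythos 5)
Your plan reproduces the paper's ``Step 1'' (decompose the diagonal via lemma \ref{diag}, act on $H^{2n-j}(\bar X)$, identify $\Delta_\ast$ with capping by lemma \ref{cap}, and bound each piece through the factorization of lemma \ref{factor}), but there are two genuine problems. First, an orientation error: you keep $\Gamma$ supported on $D\times\bar X$ and claim its contribution dies after restricting to $X$ because ``passing from $\bar X$ to $X$ kills the image of the homology of $D$''. With the action $C_\ast(b)=(p_2)_\ast((p_1)^\ast b\cap[C])$, a cycle supported on $D\times\bar X$ produces classes that depend only on $b|_D$ but are \emph{not} supported on $D$, so $\tau^\ast\Gamma_\ast(b)$ need not vanish. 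This is exactly why the paper takes the transpose of the decomposition of lemma \ref{diag}, so that $\Gamma$ sits on $\bar X\times D$, its output is pushed forward from $D$, and the localization sequence for Borel--Moore homology kills it on $X$. The fix is easy, but as written the step fails.

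Second, and more seriously, your key claim --- that the image of $H^{2n-j}(\bar X,\QQ)\to H_j(\bar X,\QQ)$ equals $W_{-j}H_j(\bar X,\QQ)$ ``precisely'' on the range $[0,n-s]\cap[2s-r,2n]$ --- is not what the intersection-homology input gives. Lemma \ref{durf} (Durfee, Hanamura--Saito, plus the Hodge-filtration statement) yields surjectivity onto the lowest weight piece only for $j\le n-s$; the bound $j\ge 2s-r$ has nothing to do with that comparison. In the paper that second regime is handled by a completely different argument (Step 2): the exact sequence $H_j(S)\to H_j(X)\to H_j(U)$ for the singular locus $S$ of $X$, where $\gr^k_FW_{-j}H_j(S)$ vanishes for $|2k+j|>r$ by a dimension/Hodge-level count using $\dim S\le s$ and $j\ge 2s-r$, and where the smooth open part $U$ is disposed of by the separate smooth quasi-projective Mumford statement (lemma \ref{smoothmum}, itself proved by rerunning Bloch--Srinivas on a smooth compactification). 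Your proposal contains no substitute for this second case, so it only covers $j\le n-s$; relatedly, you misplace the hypothesis $s\le\frac{n+r+1}{3}$, which does not enter the precise proposition at all --- it is only used afterwards to guarantee that the two intervals $[0,n-s]$ and $[2s-r,2n]$ together cover all degrees. The index chase for the $\Delta_i$ pieces (both the $\widetilde V_i$ bound $k+n,\,n-k-j\le i+r$ and the $\widetilde W_i$ bound $k+n,\,n-k-j\ge i$, whence $|2k+j|\le r$) is also left implicit, but that part is genuinely routine; the missing Step 2 is not.
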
 
      
\begin{proof} Let $\tau\colon X\to\bar{X}$ denote the given compactification, with boundary $D=\bar{X}\setminus X$.
Taking the transpose of the decomposition of lemma \ref{diag}, we obtain a decomposition of the diagonal
   \[ \Delta=\Delta_0+\Delta_1+\cdots+\Delta_{n-r}+\Gamma\ \ \in A_n(\bar{X}\times\bar{X})_{\QQ}\ ,\]
  where $\Delta_i$ is supported on $ V_i\times W_i$, and $V_i$ (resp. $W_i$) is of dimension $j+r$ (resp. $n-j$), and $\Gamma$ is supported on $\bar{X}\times D$.   

\item{\underline{Step 1: $j\le n-s$. }} Let
  \[  a\in \gr^k_F W_{-j} H_j(X,\C)\ ,\]
  with $k$ and $j$ as indicated in the proposition.
  Using strict compatibility of the Hodge filtration, one can find
  \[\bar{a}\in \gr^k_F W_{-j} H_j(\bar{X},\C)  \]
  restricting to $a$ (i.e. $\tau^\ast(\bar{a})=a\in H_{j}(X,\C)$). Applying lemma \ref{durf} below, there exists
  \[  b\in \gr_F^{k+n} H^{2n-j}(\bar{X},\C)\]
  such that
  \[ \bar{a}=b\cap[\bar{X}]\ \ \in \gr^k_F W_{-j} H_j(\bar{X},\C)\ .\]
  In other words, we have
  \[  \bar{a}=\Delta_\ast(b)=(  \Delta_0+\cdots+\Delta_n+\Gamma)_\ast(b)\ \]
  (here we have used lemma \ref{cap}), and hence
  \[ a=\tau^\ast(\bar{a})=\tau^\ast\Bigl( (  \Delta_0+\cdots+\Delta_n+\Gamma)_\ast(b)\Bigr)\ \ \in \gr^k_F W_{-j}H_j(X,\C)\ .\]
  Now, we analyze the actions of these correspondences piece by piece:
  
  First, 
    \[\tau^\ast \Gamma_\ast(b)=0\ .\]
   Indeed, using lemma \ref{factor}, we find that $\Gamma_\ast(b)$ is supported on $D$.
   
 Next, we consider the action of $\Delta_i$. There is a factorization (guaranteed by lemma \ref{factor})
      \[\begin{array}[c]{ccc}
        \cdots&&\\
        \uparrow&&\downarrow\\
       \gr_F^{k+n} H^{2n-j}(\widetilde{V_{i}},\C)&& \gr_F^{k+n-i}H^{2n-2i-j}(\widetilde{W_{i}},\C)   \\
        \uparrow&&\downarrow\\
        \gr_F^{k+n}\gr^W_{2n-j}H^{2n-j}(\bar{X},\C)&\ \ \stackrel{(\Delta_{i})_\ast}{\to} \ \  &\gr_F^{k}W_{-j}H_{j}(\bar{X},\C)\ .\\
        \end{array}\]     
The upper left group (which is just $H^{k+n,n-k-j}(\widetilde{V_i})$) vanishes for $k+n>i+r$ and for $n-k-j>i+r$. The upper right group vanishes for $k+n-i<0$ and for $n-i-j-k<0$. It follows that $(\Delta_i)_\ast(b)$ vanishes unless
  \[  \hbox{both\ }  k+n\hbox{\ and\ }n-k-j\ \in [i,i+r]\ ;\]
  in particular, $(\Delta_i)_\ast(b)$ vanishes under the hypothesis $\vert 2k+j\vert>r$.
  
  \item{\underline{Step 2 : $j\ge 2s-r$}} Let $S$ denote the singular locus of $X$, and $U=X\setminus S$ the non--singular locus.
  We have the exact sequence
    \[  \gr^k_F W_{-j}H_j(S,\C)\to \gr^k_F W_{-j}H_j(X,\C)\to \gr^k W_{-j}H_j(U,\C)\ .\]
  Suppose now $\vert 2k+j\vert>r$. Then the group on the left vanishes for dimension reasons (indeed, suppose for simplicity $S$ is equidimensional of dimension $s$, and let $\widetilde{S}\to S$ be a resolution; then $W_{-j}H_j(S,\C)$ comes from $H^{2s-j}(\widetilde{S},\C)$ which has Hodge level $\le r$). The vanishing of the group on the right follows from lemma \ref{smoothmum} below.

 \begin{lemma}\label{smoothmum}  Let $X$ be a smooth quasi--projective variety, and suppose
      \[\hbox{Niveau}(A_iX_{\QQ})\le r\ \ \hbox{for\ all\ }i\ .\]
      Then
   \[  \gr^k_F W_{-j}H_j(X,\C)=0\ \ \hbox{provided\ }\vert 2k+j\vert >r  \ .\] 
 \end{lemma} 
  
  \begin{proof} Let $\tau\colon X\to\bar{X}$ be a smooth compactification, with boundary $D=\bar{X}\setminus X$. From lemma \ref{diag}, we obtain a decomposition
    \[ \Delta=\Delta_0+\Delta_1+\cdots+\Delta_{n-r}+\Gamma\ \ \in A_n(\bar{X}\times\bar{X})_{\QQ}\ ,\]
  where $\Delta_i$ is supported on $ V_i\times W_i$, and $V_i$ (resp. $W_i$) is of dimension $j+r$ (resp. $n-j$), and $\Gamma$ is supported on $\bar{X}\times D$.   

Given $a\in \gr^k_F W_{-j}H_j(X,\C)$, we can find 
  \[\bar{a}\in \gr^k_F W_{-j}H_j(X,\C)\]
  restricting to $a$. Then we have
    \[a=\tau^\ast(\bar{a}) =\tau^\ast\bigl( (\Delta_0+\cdots+\Delta_{n-r}+\Gamma)_\ast (\bar{a})\bigr)\ \ \in H_j(X,\C)\ .\]
    Just as above, we check that $\tau^\ast\Gamma_\ast(\bar{a})=0$, and that
    \[ (\Delta_i)_\ast(\bar{a})=0\ \ \hbox{provided\ }\vert 2k+j\vert >r\ .\]  
  \end{proof}
  
\begin{lemma}\label{durf} Let $X$ be a projective variety of dimension $n$, and with singular locus of dimension $\le s$. 

\item{(\rom1)}
The natural map
  \[  \gr^W_{j} H^j(X,\QQ)\to W_{j-2n} H_{2n-j}(X,\QQ)\]
  is injective for $j\le n-s$, and surjective for $j\ge n+s$.
  
 \item{(\rom2)} For any $k$, the natural map
   \[ F^k H^j(X,\C)\to F^{k-n} W_{j-2n} H_{2n-j}(X,\C)\]
   is surjective for $j\ge n+s$. 
   
   \item{(\rom3)} The natural map
     \[ H^{2j}(X,\QQ)\cap F^jH^{2j}(X,\C)\ \to\ W_{2j-2n}H_{2n-2j}(X,\QQ)\cap F^{j-n}H_{2n-2j}(X,\C)\]
     is surjective for $j\ge n+s$.
 \end{lemma}
  
 \begin{proof} 
 
 \item{(\rom1)}
 Let $IH^jX$ denote middle--perversity intersection homology with rational coefficients. It follows from work of Durfee \cite{D} that
   \[  IH^jX=\begin{cases}   \gr^W_j H^j(X,\QQ), & j\ge n+s;\\
                                           W_{j-2n} H_{2n-j}(X,\QQ), & j\le n-s\ .
                                           \end{cases}\]
  It is well--known \cite{GM}, \cite{GM2} that the ``Poincar\'e duality'' map factors
    \[  \gr^W_j H^j(X,\QQ) \to IH^jX \to W_{j-2n} H_{2n-j}(X,\QQ)\ .\]
 Moreover, it is known \cite{HS} that the first arrow is injective, and the second arrow surjective.
 
 \item{(\rom2)} The natural map (given by the cap product) is a map of Hodge structures; as such, it is strictly compatible with the Hodge filtration.
 
 \item{(\rom3)} Consider again the factorization
   \[  \gr^W_{2j} H^{2j}(X,\QQ) \stackrel{\cong}{\to} IH^{2j}X \to W_{2j-2n} H_{2n-2j}(X,\QQ)\ .\] 
   The group $IH^{2j}X$ admits a polarized Hodge structure, given by the Hodge--Riemann relations proven in \cite[Theorem 2.2.3]{CM}. This implies (\cite[Corollary 2.24]{Vo}) that a Hodge class in the image comes from a Hodge class in $IH^{2j}X$. But since the left arrow is an isomorphism (and a map of Hodge structures), this Hodge class comes from a Hodge class in $\gr^W_{2j} H^{2j}(X,\QQ)$. 
 \end{proof}

\end{proof}

\begin{remark} The proof of proposition \ref{precisemum} actually yields a slightly more general statement, which is as follows: 
   Let $X$ be a quasi--projective variety of dimension $n$ (no condition on the singular locus), with
      \[\hbox{Niveau}(A_iX_{\QQ})\le r\ \ \hbox{for\ all\ }i\ .\]
 Then
   \[   \ima\Bigl( H^{2n-j}(X,\C)\to H_j(X,\C)\Bigr)\cap \gr^k_F=0\ \ \hbox{provided\ }\vert 2k+j\vert >r  \ .\] 
\end{remark}

\begin{remark} In the smooth case, one can easily obtain Mumford type theorems involving the coniveau filtration rather than the Hodge filtration. Unfortunately, in the singular case I have not been able to obtain such a statement. The problem lies in the use of lemma \ref{durf}: it is not clear to me whether the surjection
  \[ H^j(X,\QQ)\to W_{j-2n}H_{2n-j}(X,\QQ)\]
  respects the coniveau filtration.
  \end{remark}

\section{The Hodge conjecture}

We recall the formulation of the Hodge conjecture that is adapted to singular varieties \cite{J}, \cite{LH}.

\begin{definition}[Hodge conjecture] Let $X$ be a quasi--projective variety, and $j\in\NN$. We say that $HC(X,2j)$ holds if the cycle class map
  \[cl_j\colon A_jX_{\QQ}\ \to\  W_{-2j} H_{2j}(X,\QQ)\cap \gr^{-j}_F W_{-2j}H_{2j}(X,\C)\]
is surjective.
\end{definition}  

\begin{remark} It is known that the Hodge conjecture in degree $2j$ for all smooth projective varieties implies $HC(X,2j)$ for all quasi--projective varieties $X$; this is proven by descent \cite{J}. In particular, for $X$ of dimension $n$ we know that $HC(X,2j)$ is true for $j=0,1,n-1,n$.
\end{remark}

\begin{proposition} Let $X$ be a quasi--projective variety of dimension $n$, and suppose there exists a compactification with singular locus of dimension $\le {n+4\over 3}$. Suppose
  \[\hbox{Niveau}(A_iX_{\QQ})\le 3\ \ \hbox{for\ all\ }i\le \ell\ .\]
 Then $HC(X,2j)$ is true for $j\le \ell+2$. 
 \end{proposition}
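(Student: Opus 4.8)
The plan is to mimic the proof of Proposition \ref{precisemum}, but now tracking Hodge classes through the decomposition of the diagonal rather than Hodge-level constraints, and invoking part (\rom3) of lemma \ref{durf} at the key step. First I would fix a compactification $\tau\colon X\to\bar X$ with $D=\bar X\setminus X$ and $\dim\hbox{Sing}(\bar X)\le s:=(n+4)/3$, and apply lemma \ref{diag} with $r=3$ to get $\Delta=\Delta_0+\cdots+\Delta_\ell+\Delta^{\ell+1}+\Gamma\in A_n(\bar X\times\bar X)_{\QQ}$, with $\Delta_i$ supported on $V_i\times W_i$ ($\dim V_i=i+3$, $\dim W_i=n-i$), $\Delta^{\ell+1}$ supported on $X\times W_{\ell+1}$ (so $\Gamma$ supported on $D\times\bar X$ — or transpose as in \ref{precisemum}, whichever orientation makes the boundary term act trivially on homology after restriction to $X$).

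Next, take a Hodge class $a\in W_{-2j}H_{2j}(X,\QQ)\cap F^{-j}H_{2j}(X,\C)$ with $j\le\ell+2$, and lift it to a Hodge class $\bar a\in W_{-2j}H_{2j}(\bar X,\QQ)\cap F^{-j}H_{2j}(\bar X,\C)$ on the compactification; this lift exists by strict compatibility of the weight and Hodge filtrations for the surjection $H_{2j}(\bar X)\to H_{2j}(X)$ of mixed Hodge structures — this is the same lifting step used in Step 1 of \ref{precisemum}, and one must check it can be done compatibly in $\QQ$ and in $\C$. Now apply lemma \ref{durf}(\rom3) (with $2n-2j$ in place of $2n-2j$, i.e. in the range $2n-2j\ge n+s$, equivalently $j\le (n-s)/2$): since $\bar a\in W_{2(n-j)-2n}H_{2(n-j)}(\bar X,\QQ)\cap F^{(n-j)-n}H_{2(n-j)}(\bar X,\C)$, it is the image under capping with $[\bar X]$ of a Hodge class $b\in H^{2(n-j)}(\bar X,\QQ)\cap F^{n-j}H^{2(n-j)}(\bar X,\C)$. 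By lemma \ref{cap}, $\bar a=\Delta_\ast(b)=\sum_i(\Delta_i)_\ast(b)+(\Delta^{\ell+1})_\ast(b)+\Gamma_\ast(b)$. The boundary term dies after $\tau^\ast$ (lemma \ref{factor}, support on $D$), and the factorization of lemma \ref{factor} (together with remark \ref{alsoA} for the operational Chow cohomology) shows that each $(\Delta_i)_\ast(b)$ with $i\le\ell$ factors through a Hodge class on $\widetilde{V_i}$, which lies in the image of the cycle class map $A^\bullet\widetilde{V_i}_{\QQ}\to H^\bullet$ because the standard Hodge conjecture holds in the relevant codimension on $\widetilde{V_i}$ — here $\dim V_i=i+3\le\ell+3$, and the codimension is small enough (codimension $\le 3$, i.e. degree within $6$ of top) that it is known unconditionally; hence $(\Delta_i)_\ast(b)$ comes from $A_jX_{\QQ}$. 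The term $(\Delta^{\ell+1})_\ast(b)$ factors through $\widetilde{W_{\ell+1}}$ of dimension $n-\ell-1$; since $j\le\ell+2$ one checks the resulting Hodge class sits in a degree where $HC$ is again known (codimension or dimension within the unconditional range), so it too lifts to a cycle.

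Putting these together, $a=\tau^\ast(\bar a)$ is in the image of $A_jX_{\QQ}$ under $cl_j$, which is what is to be proven; one should also note $\gr^{-j}_FW_{-2j}H_{2j}(X,\C)=F^{-j}\cap W_{-2j}$ in these degrees so the two formulations of $HC(X,2j)$ agree. The main obstacle I expect is the bookkeeping that forces every auxiliary Hodge class — on $\widetilde{V_i}$ for $i\le\ell$, on $\widetilde{W_{\ell+1}}$, and on $\bar X$ itself via lemma \ref{durf}(\rom3) — to land in a degree/codimension where the Hodge conjecture is unconditionally available (degree $0,2,2(d-1),2d$ on a $d$-fold), and verifying that the numerical constraint $\dim\hbox{Sing}\le(n+4)/3$ is exactly what makes all these ranges line up simultaneously with $r=3$ and $j\le\ell+2$; a secondary technical point is ensuring the lift of $\bar a$ and all the factorizations are compatible between the $\QQ$-structure and the $\C$-structure (i.e. that the class produced really is a rational Hodge class, not merely an analytic one), which follows from strictness but must be stated carefully.
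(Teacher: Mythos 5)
Your Step-1 skeleton (compactify, decompose the diagonal with $r=3$, lift $a$ to $\bar a$, produce $b$ with $b\cap[\bar X]=\bar a$ via lemma \ref{durf}(\rom3), kill $\Gamma$ by its support on $D$) does match the paper's proof of proposition \ref{precise}, but your proposal has two genuine gaps. First, as you yourself note, lemma \ref{durf}(\rom3) is only available when $2n-2j\ge n+s$, i.e.\ $2j\le n-s$; your argument therefore proves nothing for $n-s<2j\le 2\ell+4$, and the proposition puts no upper bound on $\ell$, so this range is generally nonempty (e.g.\ $n=8$, $s=4$, $\ell$ large: you only reach $j\le 2$ while the statement claims all $j$). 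The paper closes this range by a separate Step 2: writing $U=X\setminus S$ for the smooth locus, using the exact sequences $A_jS_{\QQ}\to A_jX_{\QQ}\to A_jU_{\QQ}\to 0$ and $W_{-2j}H_{2j}(S)\to W_{-2j}H_{2j}(X)\to W_{-2j}H_{2j}(U)\to 0$, proving the smooth quasi-projective case (lemma \ref{smoothcase}, where no analogue of lemma \ref{durf} is needed because the compactification can be taken smooth), and observing that Hodge classes coming from $S$ are algebraic once $j\ge s-1$ (they have codimension $\le 1$ on a resolution of $S$). The hypothesis $s\le\frac{n+4}{3}$ is exactly what makes the two ranges $2j\le n-s$ and $j\ge s-1$ cover $[0,2\ell+4]$; it does not make Step 1 alone suffice, as your plan implicitly assumes.

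Second, your justification that each $(\Delta_i)_\ast(b)$ is algebraic is wrong: the Hodge conjecture is \emph{not} known unconditionally in ``codimension $\le 3$'' or ``degree within $6$ of top'' (only codimension $\le 1$ and dimension $\le 1$ are known, via Lefschetz $(1,1)$ and hard Lefschetz), and moreover the class you land on in $H_{2j}(\widetilde{V_i})$ has codimension $i+3-j$ there, which can be arbitrarily large. The paper's actual mechanism is a dichotomy coming from the two-sided factorization of lemma \ref{factor}: $(\Delta_i)_\ast$ factors through $H^{2n-2j}(\widetilde{W_i},\QQ)\cap F^{n-j}$ (classes of dimension $j-i$ on the $(n-i)$-dimensional $\widetilde{W_i}$) and lands in $H_{2j}(\widetilde{V_i},\QQ)\cap F^{j-n}$ (classes of codimension $i+3-j$ on the $(i+3)$-dimensional $\widetilde{V_i}$); for $j\le i+1$ the first group, and for $j\ge i+2$ the second group, consists of classes of dimension resp.\ codimension $\le 1$, hence algebraic. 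This is precisely where the niveau bound $3$ (i.e.\ $\dim V_i=i+3$) is used, and it is also the argument that handles $\Delta^{\ell+1}$ (dimension $j-\ell-1\le 1$ classes on $\widetilde{W_{\ell+1}}$ when $j\le\ell+2$), which you left as ``one checks.'' Without this dichotomy your proof of algebraicity of the individual pieces does not go through.
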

 
This follows from a more precise version:

\begin{proposition}\label{precise} Let $X$ be a quasi--projective variety of dimension $n$, and suppose there exists a compactification with singular locus of dimension $\le s$. Suppose
  \[\hbox{Niveau}(A_iX_{\QQ})\le 3\ \ \hbox{for\ all\ }i\le \ell\ .\]
 Then $HC(X,2j)$ is true for $2j\in \bigl([0,n-s,]\cup [2s-2,2n]\bigr)\cap[0,2\ell+4]$. 
 \end{proposition}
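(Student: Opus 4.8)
The plan is to rerun the Bloch--Srinivas argument of Proposition~\ref{precisemum}, now tracking \emph{algebraicity} of the classes produced rather than their vanishing, and to treat separately the two overlapping ranges $2j\le n-s$ and $2j\ge 2s-2$; when the singular locus is small enough these ranges cover all of $[0,2n]$, which is why the less precise statement carries no ``union''. For the first range fix $j$ with $2j\le n-s$ and $j\le\ell+2$, let $\tau\colon X\to\bar X$ be the given compactification with boundary $D$, and take $a\in W_{-2j}H_{2j}(X,\QQ)\cap F^{-j}H_{2j}(X,\C)$. Strictness of the weight filtration makes $W_{-2j}H_{2j}(\bar X,\QQ)\to W_{-2j}H_{2j}(X,\QQ)$ surjective; the source is pure of weight $-2j$, hence so is its image, and a surjection of pure Hodge structures is surjective on Hodge classes, so $a$ lifts to a Hodge class $\bar a\in W_{-2j}H_{2j}(\bar X,\QQ)\cap F^{-j}$. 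Since $2j\le n-s$ means the cohomological degree $2(n-j)$ is $\ge n+s$, lemma~\ref{durf} produces a Hodge class $b$ in $\gr^W_{2(n-j)}H^{2(n-j)}(\bar X,\QQ)$ with $\bar a=b\cap[\bar X]$, i.e.\ $\bar a=\Delta_\ast(b)$ by lemma~\ref{cap}. Now insert the transpose of the decomposition of lemma~\ref{diag} with $r=3$: $\Delta={}^t\Delta_0+\cdots+{}^t\Delta_\ell+{}^t\Delta^{\ell+1}+{}^t\Gamma$, with ${}^t\Delta_i$ supported on $W_i\times V_i$ ($\dim W_i=n-i$, $\dim V_i=i+3$), ${}^t\Delta^{\ell+1}$ on $W_{\ell+1}\times X$ ($\dim W_{\ell+1}=n-\ell-1$), and ${}^t\Gamma$ on $\bar X\times D$.

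By lemma~\ref{factor} (whose factorization is through morphisms of mixed Hodge structures, cf.\ remark~\ref{alsoA}), $\tau^\ast\,{}^t\Gamma_\ast(b)=0$, while each other term carries $b$ to a Hodge class in $H^{2(n-j)}$ of a resolution of its first support, thence by a correspondence to a Hodge class in $H_{2j}$ of a resolution of its second support, whose pushforward lands in $H_{2j}(\bar X)$. For degree reasons the $i$--th term vanishes unless $j-3\le i\le j$, and ${}^t\Delta^{\ell+1}$ vanishes unless $j\ge\ell+1$. On this window of length $3$ one of the two supports is always favourable: either the class lies in $H^{2(n-j)}(\widetilde W_i)$ in the top degree or in codimension $1$ (for $i=j,\,j-1$, and for the $\Delta^{\ell+1}$--term when $j=\ell+1,\,\ell+2$), or it lies in $H_{2j}(\widetilde V_i)\cong H^{2(i+3-j)}(\widetilde V_i)$ in the top degree or in $H^2$ (for $i=j-3,\,j-2$). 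In each case the class is algebraic, the Hodge conjecture being unconditional for $H^0,H^2$ and their hard Lefschetz duals $H^{2d-2},H^{2d}$ of any smooth projective $d$--fold. Hence $a=\tau^\ast\Delta_\ast(b)$ is a sum of pushforwards to $X$ of algebraic classes, so $a\in\ima\bigl(A_jX_\QQ\to H_{2j}(X,\QQ)\bigr)$. The ${}^t\Delta^{\ell+1}$--term is exactly what caps $j$ at $\ell+2$: for $j=\ell+3$ it would drop $b$ into codimension $2$, where the Hodge conjecture is open.

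For the second range fix $j$ with $2j\ge 2s-2$ (i.e.\ $j\ge s-1$) and $j\le\ell+2$, let $S$ be the singular locus of $X$ and $U=X\setminus S$, and use the localization sequences $A_jS_\QQ\to A_jX_\QQ\to A_jU_\QQ\to 0$ and $H_{2j}(S)\to H_{2j}(X)\to H_{2j}(U)$. Since $\hbox{Niveau}(A_iU_\QQ)\le 3$ for $i\le\ell$, the statement $HC(U,2j)$ holds for $j\le\ell+2$ by the smooth version of the argument above, run on a smooth compactification of $U$ (no hypothesis on a singular locus is needed; this is the Hodge analogue of lemma~\ref{smoothmum}). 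And $HC(S,2j)$ holds for $j\ge s-1$: resolving $S$, the weight $-2j$ part of $H_{2j}(S)$ is a quotient of $H_{2j}(\widetilde S)\cong H^{2\dim\widetilde S-2j}(\widetilde S)$, which sits in degree $\le 2$ because $\dim\widetilde S\le s\le j+1$, hence is algebraic. Now given $a\in W_{-2j}H_{2j}(X,\QQ)\cap F^{-j}$: its image in $H_{2j}(U)$ is a Hodge class, hence algebraic; lifting the corresponding cycle to $A_jX_\QQ$ and subtracting yields a Hodge class in $W_{-2j}H_{2j}(X)$ dying in $H_{2j}(U)$, which by strictness and purity of the weight $-2j$ pieces comes from a Hodge class in $W_{-2j}H_{2j}(S)$, algebraic by $HC(S,2j)$. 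So $a$ is algebraic.

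The main obstacle is the degree analysis of the diagonal pieces (and of its smooth analogue used in the second range): one has to check that the hypothesis $\hbox{Niveau}\le 3$ confines the Hodge classes that arise to the two ends of a window of length $3$, precisely where the unconditional cases $H^0,H^2,H^{2d-2},H^{2d}$ of the Hodge conjecture apply. The remaining effort is mixed Hodge theory bookkeeping: lifting rational Hodge classes along $H_{2j}(\bar X)\to H_{2j}(X)$ and along $H_{2j}(S)\to H_{2j}(X)$ using strictness and the purity of the relevant weight $-2j$ pieces, and verifying that the factorizations of lemma~\ref{factor} are morphisms of mixed Hodge structures, so that both ``Hodge'' and ``algebraic'' are preserved under them.
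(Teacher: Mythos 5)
Your proposal is correct and follows essentially the same route as the paper: lift the Hodge class to the compactification, use lemma \ref{durf} (valid since $2(n-j)\ge n+s$) together with lemmas \ref{cap} and \ref{factor} and the transposed decomposition of lemma \ref{diag}, and observe that each piece $\Delta_i$ (and $\Delta^{\ell+1}$) sends $b$ either into $H^{2d}/H^{2d-2}$ of $\widetilde W_i$ or into $H^0/H^2$ of $\widetilde V_i$, where Hodge classes are algebraic, while the second range $j\ge s-1$ is handled exactly as in the paper by localization with respect to the singular locus plus the smooth case (lemma \ref{smoothcase}). The only cosmetic differences are that you note explicitly the vanishing of the terms outside the window $j-3\le i\le j$ and spell out the polarizability/strictness points that the paper treats parenthetically.
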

 
 \begin{proof} Let $\tau\colon X\to\bar{X}$ denote the given compactification, with boundary $D=\bar{X}\setminus X$. Taking the transpose of the decomposition of lemma \ref{diag}, we obtain a decomposition of the diagonal
   \[ \Delta=\Delta_0+\Delta_1+\cdots+\Delta_\ell+\Delta^{\ell+1}+\Gamma\ \ \in A_n(\bar{X}\times\bar{X})_{\QQ}\ ,\]
  where $\Delta_i$ is supported on $W_i\times V_i$, $\Delta^{\ell+1}$ is supported on $W_{\ell+1}\times\bar{X}$, and $V_i$ (resp. $W_i$) is of dimension $j+3$ (resp. $n-j$), and $\Gamma$ is supported on $\bar{X}\times D$.   

\item{\underline{Step 1: $2j\le \min(n-s,2\ell+4)$. }} Let
  \[a\in W_{-2j} H_{2j}(X,\QQ)\cap \gr^{-j}_F W_{-2j}H_{2j}(X,\C)\]
be a Hodge class. Let $\bar{a}\in W_{-2j} H_{2j}(\bar{X},\QQ)$ be a Hodge class restricting to $a$, i.e. $\tau^\ast(\bar{a})=a$ (to see this exists, one needs to use a resolution of singularities of $\bar{X}$ and the existence of a polarisation on this resolution).
 According to lemma \ref{durf}, there exists a Hodge class
  \[b\in \gr^W_{2n-2j} H^{2n-2j}(\bar{X},\QQ)\cap \gr^{n-j}_F \gr^W_{2n-2j} H^{2n-2j}(\bar{X},\C)\ \]
  such that
  \[ b\cap[X]=\bar{a} \ \ \in H_{2j}(\bar{X},\QQ)\ .\]
  
  It follows that
  \[a=\tau^\ast(\bar{a})=\tau^\ast(\Delta_\ast b)=\tau^\ast\Bigl((\Delta_0+\cdots+\Delta^{\ell+1}+\Gamma)_\ast b\Bigr)\ \ \in H_{2j}({X},\QQ)\ ,\]
  and it remains to analyze the action of each piece in the decomposition:
  
  As for the last piece, obviously
    \[\tau^\ast\Gamma_\ast(b)=0\ ,\]
    as $\Gamma_\ast(b)$ is supported on $D$.
    
    Next, the action of $\Delta^{\ell+1}$. This factors
    \[\begin{array}[c]{ccc}
        \cdots&&\\
        \uparrow&&\downarrow\\
        H^{2n-2j}(\widetilde{W_{\ell+1}},\QQ)\cap F^{n-j}&&\downarrow\\
        \uparrow&&\\
        \gr^W_{2n-2j}H^{2n-2j}(\bar{X},\QQ)\cap F^{n-j}&\ \ \stackrel{(\Delta^{\ell+1})_\ast}{\to} \ \  &H_{2j}(\bar{X},\QQ)\ .\\
        \end{array}\]
      But the group on the left is generated by cycles for $j\le \ell+2$ (this is $HC(\widetilde{W_{\ell+1}},2$)); it follows that
      \[  (\Delta^{\ell+1})_\ast(b)\ \ \in H_{2j}(\bar{X},\QQ)\]
      is a cycle class.
      
    As for the action of $\Delta_i$, this is similar. We have a factorization
      \[\begin{array}[c]{ccc}
        \cdots&&\\
        \uparrow&&\downarrow\\
        H^{2n-2j}(\widetilde{W_{i}},\QQ)\cap F^{n-j}&& H_{2j}(\widetilde{V_{i}},\QQ)\cap F^{j-n}     \\
        \uparrow&&\downarrow\\
        \gr^W_{2n-2j}H^{2n-2j}(\bar{X},\QQ)\cap F^{n-j}&\ \ \stackrel{(\Delta_{i})_\ast}{\to} \ \  &H_{2j}(\bar{X},\QQ)\ .\\
        \end{array}\]   
             
  The upper left group is generated by cycles provided $2n-2j\ge 2\dim\widetilde{W_i}-2=2n-2i-2$, i.e. provided $j\le i+1$. The upper right group is generated by cycles provided $2j\ge 2\dim\widetilde{V_i}-2=2i+4$, i.e. provided $j\ge i+2$. It follows that for any $j$,
  \[ (\Delta_i)_\ast(b)\ \ \in H_{2j}(\bar{X},\QQ)\]
  is a cycle class.

\item{\underline{Step 2: $j\in[ s-1,\ell+2]$. }} Let $U\subset X$ be the complement of the singular locus $S$ of $X$. We have a commutative diagram with exact rows
  \[
 \begin{array}[c]{cccccc}
       A_jS_{\QQ}&\to& A_jX_{\QQ}&\to& A_jU_{\QQ}&\to 0\\
       \downarrow{cl_i}&&\downarrow{cl_i}&&\downarrow{cl_i}&\\
       W_{-2j} H_{2j}(S,\QQ)&\to&W_{-2j} H_{2j}(X,\QQ)&\to& W_{-2j} H_{2j}(U,\QQ)&\to 0\ .
       \end{array}\]
 It follows from lemma \ref{smoothcase} below that for any $j\le \ell+2$ the right vertical map is surjective on Hodge classes. Any Hodge class in $H_{2j}X$ that is supported on $S$ comes from a Hodge class on $S$ (this can be seen by going to a resolution of singularities of $S$). But the left vertical arrow is surjective on Hodge classes provided $j\ge s-1$. 
 
 \begin{lemma}\label{smoothcase} Let $U$ be a smooth quasi--projective variety of dimension $n$, and suppose
  \[\hbox{Niveau}(A_iU_{\QQ})\le 3\ \ \hbox{for\ all\ }i\le \ell\ .\]
 Then $HC(U,2j)$ is true for all $j\le \ell+2$. 
 \end{lemma}
 
 \begin{proof} Let $\tau\colon U\to\bar{U}$ denote a smooth compactification, with boundary $D=\bar{U}\setminus U$.  As above (taking the transpose of the decomposition of lemma \ref{diag}), we obtain a decomposition
   \[ \Delta=\Delta_0+\Delta_1+\cdots+\Delta_\ell+\Delta^{\ell+1}+\Gamma\ \ \in A_n(\bar{U}\times\bar{U})_{\QQ}\ ,\]
  where $\Delta_i$ is supported on $V_i\times W_i$, $\Delta^{\ell+1}$ is supported on $ W_{\ell+1}\times\bar{U}$, and $V_i$ (resp. $W_i$) is of dimension $i+3$ (resp. $n-i$), and $\Gamma$ is supported on $\bar{U}\times D$.   
  
  Let $a\in W_{-2j}H_{2j}(U,\QQ)$ be a Hodge class, where $j\le \ell+2$. Let $\bar{a}\in H_{2j}(\bar{U},\QQ)$ be a Hodge class restricting to $a$. Then
  \[ a=\tau^\ast(\bar{a})=\tau^\ast\bigl(  (\Delta_0)_\ast(\bar{a})+\cdots+(\Delta_\ell)_\ast(\bar{a})+(\Delta^{\ell+1})_\ast(\bar{a})\bigr)\]
  (since obviously $\tau^\ast\Gamma_\ast(\bar{a})=0$).
  But
    \[(\Delta^{\ell+1})_\ast(\bar{a})\ \ \in H_{2j}(\bar{U},\QQ)\]
    is a cycle class, just as above (the action of $\Delta^{\ell+1}$ factors over $H^{2n-2j}(\widetilde{W_{\ell+1}},\QQ)\cap F^{n-j}$, which is generated by cycles for $j\le \ell+2$).
    Likewise, each
    \[(\Delta_{i})_\ast(\bar{a})\ \ \in H_{2j}(\bar{U},\QQ)\]
  is a cycle class (this is the same argument as above).  
 \end{proof}

\end{proof}

\begin{remark} The argument of proposition \ref{precise} actually shows the following weak version of $HC(X,2j)$: let $X$ be projective of dimension $n$, and suppose 
  \[\hbox{Niveau}(A_iX_{\QQ})\le 3\ \ \hbox{for\ all\ }i\le \ell\ .\]
  Then the group
  \[  \ima\bigl(  H^{2n-2j}(X,\QQ)\to H_{2j}(X,\QQ)\bigr)\cap F^{-j}H_{2j}(X,\C)\]
  is generated by algebraic cycles for $j\le \ell+2$.
  \end{remark}
  
 \begin{remark} It seems likely one could likewise prove the generalized Hodge conjecture for quasi--projective varieties (as formulated in \cite[Conjecture 2.4]{L}), in the case where Chow groups have niveau $\le 2$ (extending the smooth projective case \cite{moi}). I have not looked into this.
 \end{remark}
 
 \begin{remark} In \cite{A} and \cite{A2}, Arapura studies the Hodge conjecture for (possibly singular) varieties that have a small Hodge diamond; his approach is somewhat different from the present note.
 \end{remark}

 \begin{corollary}\label{A0} Let $X$ be quasi--projective of dimension $5$, with singular locus of dimension $\le 3$. Suppose
   \[\hbox{Niveau}(A_0X_{\QQ})\le 3\ .\]
   Then $HC(X,4)$ is true.
   \end{corollary}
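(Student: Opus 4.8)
The plan is to deduce the corollary directly from Proposition \ref{precise}, applied with $n=5$. The one preliminary point is to produce a compactification of $X$ with singular locus of dimension $\le 3$, so that we may take $s=3$. Starting from any projective compactification $\bar X_0\supseteq X$ with boundary $D_0=\bar X_0\setminus X$, one has
\[
\hbox{Sing}(\bar X_0)=\hbox{Sing}(X)\ \sqcup\ \bigl(\hbox{Sing}(\bar X_0)\cap D_0\bigr),
\]
and the second piece is closed in $\bar X_0$ and disjoint from $X$. Working on the open set $\bar X_0\setminus\overline{\hbox{Sing}(X)}$, one applies resolution of singularities there and glues back the (untouched) part lying over $\overline{\hbox{Sing}(X)}$; the resulting projective variety $\bar X$ still contains $X$ as a dense open, and its singular locus is contained in $\overline{\hbox{Sing}(X)}$, hence has dimension $\le 3$. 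So $X$ admits a compactification with singular locus of dimension $\le s$ for $s=3$.

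Now I would apply Proposition \ref{precise} with $n=5$, $s=3$ and $\ell=0$: the hypothesis $\hbox{Niveau}(A_0X_{\QQ})\le 3$ is precisely ``$\hbox{Niveau}(A_iX_{\QQ})\le 3$ for all $i\le 0$'' (since $A_iX=0$ for $i<0$). The proposition then gives $HC(X,2j)$ for every $j$ with
\[
2j\in\bigl([0,n-s]\cup[2s-2,2n]\bigr)\cap[0,2\ell+4]=\bigl([0,2]\cup[4,10]\bigr)\cap[0,4]=[0,2]\cup\{4\}.
\]
In particular $2j=4$ lies in this range, so $HC(X,4)$ holds, which is the assertion of the corollary.

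I do not expect a genuine obstacle here: granting Proposition \ref{precise}, this is a numerical specialization, and the only step requiring a little care is the construction of the compactification in the first paragraph — that is, checking that the hypothesis on $\hbox{Sing}(X)$ (rather than on a compactification) already suffices to feed $s=3$ into the proposition. If instead one reads the hypothesis of the corollary as directly concerning a compactification, the proof collapses to the single observation that $4\in[2s-2,2n]\cap[0,2\ell+4]$ when $(n,s,\ell)=(5,3,0)$.
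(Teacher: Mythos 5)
Your reduction of the corollary to Proposition \ref{precise} is exactly the intended one, and the numerical check ($n=5$, $s=3$, $\ell=0$, so $2j=4\in([0,2]\cup[4,10])\cap[0,4]$) is correct. The gap is in your first paragraph, where you manufacture a compactification with singular locus of dimension $\le 3$ by resolving on $\bar X_0\setminus\overline{\hbox{Sing}(X)}$ and ``gluing back the untouched part lying over $\overline{\hbox{Sing}(X)}$''. Write $T=\overline{\hbox{Sing}(X)}$ and $\Sigma=\hbox{Sing}(\bar X_0)\cap(\bar X_0\setminus T)$; the resolution of $\bar X_0\setminus T$ is an isomorphism only away from $\Sigma$, and $\Sigma\subset D_0$ may very well have closure meeting $T$ (boundary singularities can accumulate along $\overline{\hbox{Sing}(X)}$). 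In that case there is no open neighbourhood of $T$ over which nothing has been modified, so the proposed gluing is simply not defined; and even when $\overline{\Sigma}\cap T=\emptyset$, you would still owe an argument that the glued object is separated, proper and projective (gluings of this kind are none of these automatically). So, as written, the claim ``$\dim\hbox{Sing}(X)\le 3$ implies there is a compactification with singular locus of dimension $\le 3$'' is not established, and it is not an innocent point: all the obvious repairs (extending the blown-up ideal, graph closures, partial normalization) run into the same accumulation problem.

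Fortunately the corollary does not need that bridging statement. Either one reads the hypothesis as in Propositions \ref{hodge} and \ref{precise}, i.e.\ as a condition on a compactification (this is evidently how the paper uses it, and then your final one-line observation suffices); or, keeping the literal hypothesis on $\hbox{Sing}(X)$, one observes that for $j=2$ one is in the range $j\in[s-1,\ell+2]$ treated in Step 2 of the proof of Proposition \ref{precise}, and that step never uses the compactification hypothesis: it only uses the localization sequences for $S=\hbox{Sing}(X)$ and $U=X\setminus S$, Lemma \ref{smoothcase} applied to the smooth variety $U$ (note that $\hbox{Niveau}(A_0X_{\QQ})\le 3$ passes to the open subset $U$), and $HC(S,4)$, which holds because $\dim S\le 3$. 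Replacing your first paragraph by either of these remarks makes the proof correct; the resolve-and-glue construction should be dropped.
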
  
   
 In particular, corollary \ref{A0} applies to log $\QQ$--Fano varieties; by a result of Zhang \cite{Z} such varieties are rationally connected, hence $\hbox{Niveau}(A_0X_{\QQ})\le 0$. 
    
\begin{corollary}\label{A1} The Hodge conjecture $HC(X,\ast)$ is completely verified in the following cases:

\item{(\rom1)} $X$ is a cubic of dimension $6$, and with singular locus of dimension $\le 3$;

\item{(\rom2)} $X\subset\PP^8$ is the intersection of a quadric and a cubic, and $X$ has singular locus of dimension $\le 3$.

\end{corollary}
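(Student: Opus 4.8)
The plan is to deduce both cases from Proposition \ref{precise}. In each case $X$ is projective, so it is its own compactification, with empty boundary and singular locus of dimension $s\le 3$; since $n:=\dim X=6$ we have $s\le 3<\tfrac{n+4}{3}=\tfrac{10}{3}$, and moreover
\[ \bigl([0,\,n-s]\cup[2s-2,\,2n]\bigr)\ \supseteq\ [0,3]\cup[4,12]\ =\ [0,12]\ , \]
so that every even degree $2j$ with $0\le j\le 6$ lies in this range. Hence, once one knows
\[ \hbox{Niveau}\bigl(A_iX_{\QQ}\bigr)\le 3\qquad\hbox{for all }i\ , \]
Proposition \ref{precise} --- applied with $\ell$ arbitrarily large --- yields $HC(X,2j)$ for every $j$, i.e. $HC(X,\ast)$; for $j\in\{0,1,5,6\}$ this also follows by descent from the smooth projective case. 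So the whole proof reduces to the displayed niveau bound.

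For $i\ge 3$ that bound is vacuous, since one may take $Y=X$ in the definition of niveau ($\dim X=6\le i+3$). For $i=0$: $X$ is a Gorenstein Fano variety ($\omega_X\cong\mathcal O_X(-5)$ for the cubic, $\cong\mathcal O_X(-4)$ for the $(2,3)$--intersection) possessing a smooth point (as $\dim\hbox{Sing}(X)\le 3<6$), and a cubic hypersurface, resp. a $(2,3)$--complete intersection, of dimension $\ge 2$ with a smooth point is unirational; hence $A_0X_{\QQ}=\QQ$.

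It remains to bound $\hbox{Niveau}(A_iX_{\QQ})$ for $i=1,2$. One first reduces to the smooth case by a standard argument --- $X$ is a member of the irreducible family of all cubic, resp. $(2,3)$, sixfolds, whose general member is smooth, and niveau bounds on Chow groups are semicontinuous in such a flat family (a supporting subvariety, together with the rational equivalences trivialising cycles off it, spreads out over a neighbourhood), the hypothesis $\dim\hbox{Sing}(X)\le 3$ serving to absorb the exceptional contribution of a resolution of the special fibre; alternatively one passes directly to a resolution of $X$. So it suffices to prove $\hbox{Niveau}(A_iX'_{\QQ})\le 3$ for $i=1,2$ and $X'$ a \emph{smooth} projective cubic, resp. $(2,3)$, sixfold. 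For such $X'$ a routine Jacobian--ring computation gives $h^{6,0}(X')=h^{5,1}(X')=0$; thus $H^6_{\mathrm{prim}}(X',\QQ)$ has $h^{p,q}=0$ whenever $\min(p,q)\le 1$, so it is of coniveau $\ge 2$. Because $X'$ carries abundant linear subspaces --- the families of lines and of planes each sweep it out, providing explicit incidence correspondences --- this coniveau bound is realised on the level of motives, and the resulting control of the Chow groups away from the middle shows that $A_iX'_{\QQ}$ is supported on a subvariety of dimension $\le 4$ for $i=1,2$; hence $\hbox{Niveau}(A_iX'_{\QQ})\le 3$. This establishes the displayed bound for all $i$, and the proof is complete.

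The real obstacle is the last step: upgrading the coniveau of $H^6_{\mathrm{prim}}(X')$ to a statement about the Chow groups $A_1X'_{\QQ}$ and $A_2X'_{\QQ}$. The Hodge--theoretic input is routine, but its cycle--theoretic counterpart --- which is for cubic and $(2,3)$ sixfolds the analogue of the known triviality results for the Chow groups of cubic fourfolds --- is a genuine theorem, resting on the cycle--theoretic connectivity theorems of Nori and Paranjape and on the geometry of the Fano varieties of linear subspaces; in practice one invokes it from the literature. It is precisely here that the numerical hypotheses of the corollary (dimension $6$, degree $3$ resp. $(2,3)$) are indispensable.
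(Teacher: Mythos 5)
Your overall reduction (Proposition \ref{precise} with $\ell$ large, after checking the interval condition for $n=6$, $s\le 3$) is fine as bookkeeping, but it forces you to prove $\hbox{Niveau}(A_2X_{\QQ})\le 3$, and the argument you offer for that is where the proof genuinely breaks. First, the passage from the singular $X$ to a smooth member of the family is not legitimate: spreading-out/``semicontinuity'' arguments control the \emph{general} fibre of a family by the total space (or the geometric generic fibre), never a \emph{special} singular fibre by the general ones; and the alternative of ``passing to a resolution'' does not help either, since a resolution of $X$ is not a cubic (resp.\ $(2,3)$) sixfold, so smooth-case results do not apply to it. (Specialization arguments \`a la Voisin do go from general to special fibre, but they concern decompositions of the diagonal/$A_0$, need hypotheses on the singularities, and conclude about a desingularization --- they do not deliver niveau bounds for $A_1,A_2$ of $X$ itself.) Second, even for a \emph{smooth} cubic or $(2,3)$ sixfold, the step from $h^{6,0}=h^{5,1}=0$ (Hodge coniveau $2$ on $H^6_{\mathrm{prim}}$) to support of $A_2$ on a divisor is precisely the conjectural direction (generalized Bloch conjecture), not ``a genuine theorem invoked from the literature'': the connectivity results of Nori, Paranjape, Esnault--Levine--Viehweg \cite{ELV} and Hirschowitz--Iyer \cite{HI} for these multidegrees give $A_iX_{\QQ}=\QQ$ only for $i\le 1$, and to my knowledge nothing is known unconditionally about the support of $A_2$ of a cubic sixfold. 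So the hypothesis ``$\hbox{Niveau}(A_iX_{\QQ})\le 3$ for all $i$'' is not available, and your route collapses at its central point.

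The paper's proof avoids exactly this: it only needs $\hbox{Niveau}(A_iX_{\QQ})\le 0$ for $i\le 1$, and it gets this by citing \cite{ELV} and \cite{HI}, whose theorems are proved for the possibly singular complete intersection $X$ itself (the proofs go through families of lines/fat planes through points and do not use smoothness), so no reduction to the smooth case is needed at all. With $\ell=1$, Proposition \ref{hodge} then handles $HC(X,2j)$ for $j\le 3$, while the degrees away from the middle are disposed of differently: for $2j<\dim X$ the (possibly singular) complete intersection $X$ has $H_{2j}(X,\QQ)$ generated by a linear section (Lefschetz-type theorems), and the top degrees fall under the known cases $j\in\{0,1,n-1,n\}$ obtained by descent --- this is what replaces the control of $A_2$ that your ``$\ell$ arbitrarily large'' strategy requires. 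Two smaller points: your unirationality detour for $A_0$ (in particular for a singular $(2,3)$ intersection with merely a smooth point) is both unnecessary and not clearly documented, since $A_0$ and $A_1$ come directly from the cited results; and $[0,3]\cup[4,12]$ is of course not the interval $[0,12]$, though it does contain all the relevant even integers.
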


\begin{proof} Since $X$ is in both cases a complete intersection, it suffices to consider $HC(X,j)$ for $j\ge\dim X=6$. The result now follows from proposition 
\ref{hodge}, plus the fact that
  \[ \hbox{Niveau}(A_iX_{\QQ})\le 0\ \ \hbox{for\ }i\le 1\ .\]
In case (\rom1), this statement was proven by Esnault--Levine--Viehweg \cite{ELV}; in case (\rom2) this is proven by Hirschowitz--Iyer \cite{HI}.  
\end{proof}




\begin{acknowledgements}
This note was written while preparing for the Strasbourg ``groupe de travail'' based on the monograph \cite{Vo}. I wish to thank all the participants of this groupe de travail for the very pleasant and stimulating atmosphere.
\end{acknowledgements}



\end{document}